\definecolor{mygray}{gray}{0.92}
\newcolumntype{C}[1]{>{\centering\arraybackslash$}p{#1}<{$}}
\newcounter{myequation}[equation]
\theoremstyle{plain}
\newtheorem{theorem}{Theorem}[section]
\newtheorem{question}[theorem]{Question}
\newtheorem{proposition}[theorem]{Proposition}
\newtheorem{lemma}[theorem]{Lemma}
\newtheorem{corollary}[theorem]{Corollary}
\theoremstyle{definition}
\theoremstyle{remark}
\newtheorem{remark}[theorem]{Remark}
\numberwithin{equation}{section}
\def\eps{\varepsilon}
\def\epsilon{\varepsilon}
\def\theta{\vartheta}
\def\tilde{\widetilde}
\DeclareMathOperator{\Aut}{Aut}
\DeclareMathOperator{\Gal}{Gal}
\DeclareMathOperator{\GL}{GL}
\DeclareMathOperator{\PSL}{PSL}
\DeclareMathOperator{\SL}{SL}
\def\C{\mathbb{C}}
\def\F{\mathbb{F}}
\def\N{\mathbb{N}}
\def\P{\mathbb{P}}
\def\Q{\mathbb{Q}}
\def\Z{\mathbb{Z}}
\def \PSLZ  {\PSL_2(\Z)}
\def \wt    {\kappa}
\author[Anni]{Samuele Anni}
\address{%
  Samuele Anni,
  Aix-Marseille Université, CNRS, Centrale Marseille, Institut de Mathématiques de Marseille
  case 907, 163 avenue de Luminy, F13288 Marseille cedex 9,
France}
\email{samuele.anni@univ-amu.fr}
\author[Assaf]{Eran Assaf}
\address{%
    Eran Assaf,
    Department of Mathematics, Dartmouth College, 6188 Kemeny Hall, Hanover, NH 03755, USA
}
\email{eran.assaf@dartmouth.edu}
\author[Lorenzo Garc\'ia]{Elisa Lorenzo Garc\'ia\vspace{-8ex}}
\address{%
	Elisa Lorenzo Garc\'ia,
  Universit\'e de Neuch\^atel, Institut de Math\'ematiques, Rue Emile-Argand 11, 2000, Neuch\^atel,
  Switzerland. ~--~
 Univ Rennes, CNRS, IRMAR - UMR 6625, F-35000
 Rennes, %
  France. %
}
\email{elisa.lorenzo@unine.ch, elisa.lorenzogarcia@univ-rennes1.fr}
\title{On smooth plane models for modular curves of Shimura type}
\keywords{Modular curves, congruence subgroups, cusp forms, canonical model, smooth plane model, gonality.}
\subjclass{Primary 11G18, 14G35; Secondary 11F11, 14H45.}
\begin{document}

\maketitle
\begin{abstract}
In this paper we prove that there are finitely many modular curves that admit a smooth plane model. Moreover, if the degree of the model is greater than or equal to $19$, no such curve exists. For modular curves of Shimura type we show that none can admit a smooth plane model of degree $5,6$ or $7$. Further, if a modular curve of Shimura type admits a smooth plane model of degree $8$ we show that it must be a twist of one of four curves.
\end{abstract}


\section{Introduction}

The compactification, by normalisation, of the quotient space of the complex upper half plane by the action of a subgroup $\Gamma$ of $\SL_{2}(\Z) = \Gamma(1)$, a modular group, is called a modular curve, $X_{\Gamma}$ and it admits the structure of a compact Riemann surface. Serre’s GAGA theorem tells us that $X_{\Gamma}(\C)$ is a projective complex algebraic curve. Furthermore, Shimura, in \cite{SHI71}*{Proposition 6.9}, proved that modular curves admit the structure of projective algebraic curves, see also \cite{DISH}*{\S 7.7}. The problem of computing equations for such curves and their projective embeddings has been a central topic in numerous papers, motivated by a plethora of applications. We will describe some of those while making a summary of the state of the art. 

Modular curves are moduli spaces for elliptic curves with a given level structure. Given a positive integer $N$ and a subgroup $G\subseteq \mathrm{GL}_2(\Z/N\Z)$, the modular curve $X_G$ parametrises pairs $(E,\phi)$ up to isomorphism, where $E$ is an elliptic curve and $\phi$ is a $G$-level structure on the $N$-torsion of $E$. Therefore, the explicit knowledge of models of modular curves becomes key for understanding properties of elliptic curves. This aspect leads to several different applications, for example towards coding theory or for solving Diophantine applications, starting with the proof of Fermat's last theorem where they appear in several steps.
In this article we study whether it is possible to have ``nice'' models, that is smooth plane models, for modular curves defined over the rationals.

In general, finding equations for modular curves of large level is computationally difficult, as it involves computing group actions on large spaces and linear algebra over large cyclotomic fields. However, for some groups it is easier to compute such curves, e.g.  in \cite{G1996} Galbraith computes modular curves for the groups $\Gamma_0(N)$. One can look for a slightly larger class of groups: given a positive integer $N$, a \emph{group of Shimura type} of level $N$, as introduced originally by Shimura in \cite{SHI71}, is a subgroup $\Gamma(H,t) \subseteq \PSL_{2}(\Z)$ projection of a subgroup $G(H,t)\subseteq \SL_{2}(\Z)$ of the form
$$
G(H,t) = \left \{ \left( \begin{array}{cc} a & b \\ 0 & d \end{array} \right)  \in GL_2(\Z / N \Z)  :  a \in H,  t \mid b \right \},
$$
where $H \subseteq (\Z / N \Z)^{\times}$ is a subgroup and $t \mid N$. We will call a modular curve of Shimura type any modular curve corresponding to the choice of a group of Shimura type.

The main result of this article is the following:

\begin{theorem}\label{thm:main}
There are finitely many modular curves which admit a smooth plane model over the rationals. There is no modular curve which admits a smooth plane model of degree greater or equal to $19$.
Moreover, there is no modular curve of Shimura type which admits a smooth plane model of degree $5, 6$ or $7$. A modular curve of Shimura type which admits a smooth plane model of degree $8$ must be a twist of one of four curves.  
\end{theorem}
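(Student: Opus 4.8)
The plan is to combine the genus/gonality constraints forced by admitting a smooth plane model with the arithmetic of modular curves. Recall that a smooth plane curve of degree $d$ has genus $g = \binom{d-1}{2}$ and gonality $d-1$ (the projection from a point on the curve), so the existence of a smooth plane model of degree $d$ forces $g_{X_\Gamma} = \binom{d-1}{2}$. First I would establish the finiteness statement: by work bounding the gonality of modular curves from below in terms of the index $[\PSLZ : \Gamma]$ (the Abramovich/Zograf-type bound $\gamma_\Q(X_\Gamma) \gg [\PSLZ:\Gamma]$), together with the Ogg-type bound relating genus to index, only finitely many $\Gamma$ can have genus exactly $\binom{d-1}{2}$ \emph{and} gonality exactly $d-1$; and since $g = \binom{d-1}{2}$ grows quadratically in $d$ while the number of congruence subgroups of bounded index is finite, one gets an absolute bound on $d$. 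Pinning down the constant $19$ would come from making the Abramovich bound effective: for $d \ge 19$ the required gonality $d-1 \ge 18$ exceeds the gonality of every modular curve of the corresponding genus.

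For the Shimura-type refinement I would use that groups of Shimura type are highly constrained — they are determined by the triple $(N, H, t)$ — so one can enumerate all of them with genus in the relevant range. For degrees $5,6,7$ the target genera are $g = 6, 10, 15$. The strategy is: (1) list all groups of Shimura type $\Gamma(H,t)$ with $g_{X_{\Gamma(H,t)}} \in \{6,10,15\}$, a finite and explicitly computable list via the index-genus formula for $\Gamma_0$-type groups and the covering $X_{\Gamma(H,t)} \to X_0(t)$ or $X_1(N)$; (2) for each such curve, rule out a smooth plane model either by a gonality obstruction (if $\gamma_\Q(X_\Gamma) \ne d-1$) or, when the gonality is numerically compatible, by a finer invariant. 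The cleanest finer obstruction is that a smooth plane model of degree $d$ is cut out by the canonical map composed with a specific very ample line bundle of degree $d$ with $h^0 = 3$, equivalently the existence of a $g^2_d$ that is \emph{not} composed with an involution and whose image is smooth; for the modular curves in question one checks against the known decomposition of their Jacobians (the curve must be non-hyperelliptic, non-trigonal in the relevant cases, etc.) and against the classification of curves of genus $6,10,15$ with a smooth plane model.

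For degree $8$, $g = \binom{7}{2} = 21$, and I expect the same enumeration to leave a short list of candidate groups of Shimura type with genus $21$; one then checks which actually carry a degree-$8$ smooth plane model (a $g^2_8$ of the right type), arriving at exactly four curves up to the natural equivalence, with the ``twist'' language accounting for the fact that we work over $\Q$ and a smooth plane model over $\Qbar$ descends only up to quadratic/cubic twisting of the plane embedding. The main obstacle I anticipate is step (2): for the finitely many candidate curves whose gonality is numerically equal to $d-1$, one must genuinely decide whether the requisite $g^2_d$ exists and gives a \emph{smooth} plane model, rather than a plane model with nodes; this requires either explicit equations (feasible here since $N$ is bounded and these are $\Gamma_0$-type levels, so the $q$-expansion/canonical-embedding machinery applies) or a Brill--Noether/linear-system argument ruling out the configuration abstractly. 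The finiteness and the bound $19$ are, by contrast, soft consequences of effective gonality growth once the constants are tracked carefully.
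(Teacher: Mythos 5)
The first half of your plan coincides with the paper's: combining the genus--degree formula, the fact that a smooth plane curve of degree $d$ has gonality $d-1$, and the effective Abramovich/Kim--Sarnak lower bound for the gonality of modular curves (in terms of the genus, respectively the index) yields $975d^{2}-19309d+16384\le 0$, hence $d\le 18$, and finiteness then follows from Cox--Parry's finiteness of congruence subgroups of given genus. That part is correct and is exactly how the paper proves the first two claims.

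The genuine gap is in your step (2), precisely where you yourself flag the obstacle: you give no workable criterion that decides, for a concrete modular curve of genus $10$, $15$ or $21$, that no $g^2_d$ exists. There is no off-the-shelf ``classification of curves of genus $6,10,15$ with a smooth plane model'' to check against, the exact gonality over $\Q$ of these curves is not computable in practice, and Jacobian decompositions give no obstruction to a plane model. Your fallback of ``explicit equations, feasible since $N$ is bounded'' breaks down exactly at genus $15$ and $21$, where the paper reports that even the relevant canonical models and graded Betti numbers are beyond computational reach. What the paper actually uses is: at genus $6$, the Noether--Enriques--Petri theorem (is the canonical ideal generated by quadrics?) plus an explicit parametrization to separate plane quintics from trigonal curves; at genus $10$, the nonvanishing of the Koszul cohomology group $\mathcal{K}_{(d-3)(d-2)/2,1}(C,K)$, equivalently $\beta_{d-4,d-2}\neq 0$ (a special case of Green's conjecture), computed from the canonical model reduced modulo a good prime, together with a map-to-an-elliptic-curve argument for $X_0(92)$; at genus $15$ and $21$, Atkin--Lehner involutions combined with Harui's theorem that an involution of a smooth plane curve of degree $d$ has $d+\frac{1-(-1)^d}{2}$ fixed points, so Riemann--Hurwitz forces the quotient genus to be $6$ for $d=7$ and $9$ for $d=8$, plus Greco's theorem excluding maps to $\P^1$ of intermediate degrees (applied via hyperelliptic Atkin--Lehner quotients) and a Clifford-index/Betti-table argument for $X_0(256)$. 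Without tools of this kind your plan stalls at the decisive step. Note also that the four genus-$21$ curves are not shown to admit smooth plane models: they are simply the Shimura-type candidates that could not be eliminated (their Atkin--Lehner quotients are currently not computable), so the theorem only asserts an upper bound, and your proposed verification that they ``actually carry'' a degree-$8$ model is neither required for the statement nor currently feasible.
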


The four curves mentioned in Theorem \ref{thm:main} are listed in \S 5,  \ref{genus21}.

Surprisingly, during the computations we found the example of a Galois trigonal, i.e. superelliptic of degree $3$, model of a modular curve of genus $6$. 

\subsection{State of the art}
Galbraith in \cite{G1996} presented several techniques to obtain explicit models of modular curves by computing projective embeddings, relying on the computation of spaces of modular forms. 
Let us recall Galbraith's approach for the modular curves $X_{0}(N)$ for a positive integer $N$. There is a well-known canonical affine equation for $X_{0}(N)$ using $N$-modular polynomials that are symmetric polynomials $\phi (x,y)\in \Z[x,y]$, of degree $N+1$ in each variable, such that $\phi(j(\tau),j(N\tau)) = 0$, where $j(\tau)$ be the classical modular $j$-function. These equations have very large degree, the model is highly singular, and the coefficients involved are enormous. Galbraith's approach consists in obtaining equations via the canonical embedding, which is suitable for practical computation since the differentials on the curve correspond to the weight $2$ cusp forms for $\Gamma_{0}(N)$. Chosen a basis $\{f_1, \dots , f_{g}\}$ for the weight $2$ cusp forms, the canonical map is translated into $\tau \mapsto [f_1(\tau): \dots : f_{g}(\tau)]$ and gives a map from $X_{0}(N)$ to $\P^{g-1}(\C)$ from the modularity of the forms $f_{i}(\tau)$. Galbraith's strategy is a key element in our approach towards the main theorem of this article.

Kohel in \cite{K1997} presented a different method which involves quaternions and a different approach towards the computation of the differentials. These approaches have been used, together with others, to collect the database of small modular curve models available in \verb+Magma+ \cite{Magma}.

Despite the lack of a general algorithm, models for several modular curves have been found in the literature with a wide range of applications in mind. We mention some of these, as well as their relevance towards various research directions. 

Baran found models for the isomorphic curves $X_{\mathrm{ns}^+}(13)$ and $X_{s^+}(13)$ in \cite{B2010}. The study of integral points on these curves relates to the Serre's uniformity question over $\Q$, as in \cite{S1972}. More recently, Dose, Mercuri and Stirpe \cite{DMS2017} proposed a new approach for computing (singular) models in order to study Serre's question.

Derickx, Najman and Siksek \cite{DNS2020} proved that elliptic curves over totally real cubic fields are ``modular" meaning that their $L$-functions match the $L$-function of the associated Hilbert modular forms. A key step to obtain this result is the study of points on a plane (singular) model of $X(\mathrm{b}5,\mathrm{ns}7)$. 

Banwait and Cremona \cite{BC2014} examined the failure of the local-to-global principle for the existence of $\ell$-isogenies between elliptic curves over number fields by, among others elements, determining a model for the exceptional modular curve $X_{S_4}(13)$. Zywina, in \cite{Z2020}, generalised the work of Banwait and Cremona, by relying on numerical approximation of pseudo--eigenvalues of Atkin--Lehner operators. Through his approach it is possible to determine $q$-expansions and models for modular curves. 

Box in \cite{B2021} described an algorithm \cite{B2021}*{Algorithm~4.13}, that has been implemented by the second named author, see \cite{ASSAFgit}, for computing the canonical model for $X_G/\Q$ in the case where $G$ has surjective determinant, $-I\in G$ and $G$ is normalised by $J:= \left(\begin{smallmatrix} -1&0\\0&1\end{smallmatrix} \right)$. In this algorithm, one first determines the $q$-expansions of a basis for the corresponding space of cusp forms and then a model, using the techniques developed by Galbraith \cite{G1996} when the genus is at least $ 2$. 
Box's algorithm presents the advantage that, for a finite groups $\mathcal{A}$ of the automorphism group of $X_G$, it is possible to determine a model for the quotient curve $X_G / \mathcal{A}$ directly, without computing $X_G$ first. Box's algorithm is another key ingredient to reach the conclusion of the main result of this article.

Notice that for degree larger than $3$ all smooth plane curves are non-hyperelliptic, see for example \cite{Hart}*{Ex. IV.5.1}. In \cite{BKX2013} the authors prove that for $N \geq 8$ all geometrically connected curve modular curves $X(N)$ defined over $\Q$ are neither hyperelliptic nor bielliptic.

Enge and Schertz \cite{ES2005} presented (singular) models for the modular curves $X_{0}(N)$ for $N$ the product of two arbitrary primes using Dedekind’s $\eta$ functions. 
Kodrnja in \cite{K2018}, relying on the embeddings in projective space through modular forms and modular functions presented by Muić in \cite{M2014} for computing models of modular curves, was able to find an explicit recipe to obtain plane (singular) models for all modular curves $X_{0}(N)$ for $N \geq2$. The equation of the model is the minimal polynomial of the modular function $\Delta(Nz)/\Delta$ over $\C(j)$, where $\Delta$ is the Ramanujan $\Delta$ function and $j$ is the modular $j$ function. Some plane (singular) models for modular curves $X_{0}(N)$ were already found by Hasegawa and Shimura in \cite{HS1999} using different ideas, in particular studying the gonality of modular curves. 

Borisov, Gunnells and Popescu \cite{BGP2001} showed that it is possible to determine explicitly an embedding of the modular curve $X_{1}(p)$ into $\P^{\frac{(p-3)}{2}}$, where $p\geq 5$ is a prime, using weight one Eisenstein series. The equation obtained is a (singular) quadratic equation. More recently, Baziz \cite{BA2010} proposed different (singular) models for $X_{1}(N)$ using $N$-division polynomials, and so with the advantage of keeping track explicitly of the corresponding pairs $(E,P)$ parametrised by the curve.

In this article we are interested only in modular curves as classically presented: projective complex algebraic curves corresponding to the compactification of the quotient space of the complex upper half plane by the action of a modular subgroup. Nevertheless, it is possible to define curves that are modular: a curve $C$ over $\Q$ is modular if it is dominated by $X_{1}(N)$ for some $N$. Moreover, if in addition the image of the jacobian of the curve in $J_{1}(N)$ is contained in the new subvariety of $J_{1}(N)$, then $C$ is new-modular. 
Under this definition, the modular curves associated to the classical modular groups $\Gamma_0(N)$ and $\Gamma_1(N)$, for some positive integer $N$, are curves that are modular. In particular there are infinitely many curves over $\Q$ that are modular and of genus $1$: elliptic curves over $\Q$ are modular. 
Baker, González-Jiménez, González and Poonen in \cite{BGGP2005} showed that for each genus $g \geq 2$, the set of curves over $\Q$ of genus $g$ that are new-modular curves is finite and computable. In particular, by analysing the automorphism group of the curve and the dominant map, they describe explicitly all curves that are new-modular of genus $2$, and construct a list of new-modular hyperelliptic curves of all genera (this list might be complete, but there are pathologies presented in the last sections of the aforementioned paper). 
 In \cite{GO2010} Gonz{\'a}lez-Jim{\'e}nez and Oyono gave an algorithm to compute explicit equations for non-hyperelliptic curves that are modular of genus $3$ over $\Q$. Moreover they conjectured that the list of  non-hyperelliptic curves that are new-modular and  of genus $3$ consists of  $44$ curves, and provided equations for all of them. The issue, as in \cite{BGGP2005}, is giving a bound for the coefficients of the modular forms involved.

\subsection{Structure of the paper}
In \S 2 we prove that there are a finite number of modular curves admitting a smooth plane model. To achieve this result, we bound the genus of such curves and notice that there is a finite number of congruence subgroups of any given genus. Moreover, we explicitly bound the level and the index of such groups. These results give us a finite list of groups corresponding to modular curves that may admit a smooth plane model. In \S 3 we discuss how to perform the computation of the canonical model of the relevant modular curves. In particular, we present the analysis regarding the runtime of the algorithm for computing $q$-expansions, with the precision required to prove the correctness of the resulting equations. 
Later, in \S 4 we present an algorithm that, given a canonical model of a non-hyperelliptic curve, checks whether the curve admits a smooth plane model and, if it is the case, computes it. Finally, in \S 5 we present our computations regarding Shimura type modular forms and modular curves.


\subsection{Acknowledgements} We thank the organisers of the conference "Arithmetic Aspects of Explicit Moduli Problems" held in Banff in June 2017 where this project was first mentioned. We thank Peter Bruin, Bas Edixhoven and Noam Elkies for insightful discussions during the aforementioned conference. We also thank John Voight for putting us in contact with each other. Last but not least, we thank Wouter Castryck and all the referees that read our paper and helped us to improve its results and exposition; in particular, to the one detecting a crucial error in the first version of Section \ref{sec:4}. 

The research of the first and third author is partially funded by the Melodia ANR-20-CE40-0013 project.
The second author was supported by a Simons Collaboration grant (550029, to John Voight).

\section{A bound for the genus}

In this section we prove the first two parts of the main theorem, Theorem~\ref{thm:main}. 

\begin{theorem}
\label{finite}
There are a finite number of modular curves admitting a smooth plane model. Moreover, the degree of such model is less or equal to $18$.
\end{theorem}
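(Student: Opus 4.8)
The plan is to play three facts off against one another: the numerical invariants of a smooth plane curve, Abramovich's linear lower bound for the gonality of a modular curve, and the Riemann--Hurwitz formula for the covering $X_\Gamma \to X(1)$.

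Suppose the modular curve $X_\Gamma$, attached to a congruence subgroup $\Gamma$ with image $\bar\Gamma \subseteq \PSL_2(\Z)$ of index $d_\Gamma := [\PSL_2(\Z):\bar\Gamma]$, admits a smooth plane model $C \subseteq \P^2$ of degree $d$. Two classical facts about smooth plane curves enter: its genus is $g = \binom{d-1}{2} = \tfrac{(d-1)(d-2)}{2}$, and, for $d \geq 4$, its gonality is exactly $d-1$ --- the pencil of lines through a point of $C$ realises a $g^1_{d-1}$, while $K_C \cong \mathcal{O}_C(d-3)$ shows that no smaller pencil exists (for $d \leq 3$ the asserted degree bound is trivial, and the gonality is then $\leq 2$). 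Thus $\mathrm{gon}(X_\Gamma) = d-1$ and $g(X_\Gamma) = \tfrac{(d-1)(d-2)}{2}$.

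Next I would invoke Abramovich's theorem: for any congruence subgroup $\Gamma$ one has $\mathrm{gon}(X_\Gamma) \geq \tfrac{\lambda_1}{24}\, d_\Gamma$, where $\lambda_1$ is the smallest positive eigenvalue of the hyperbolic Laplacian on $X_\Gamma$, and the Kim--Sarnak bound $\lambda_1 \geq \tfrac{975}{4096}$ yields $\mathrm{gon}(X_\Gamma) \geq \tfrac{975}{98304}\, d_\Gamma$. On the other hand, Riemann--Hurwitz for $X_\Gamma \to X(1)$ gives $g(X_\Gamma) = 1 + \tfrac{d_\Gamma}{12} - \tfrac{\nu_2}{4} - \tfrac{\nu_3}{3} - \tfrac{\nu_\infty}{2} \leq 1 + \tfrac{d_\Gamma}{12}$, hence $d_\Gamma \geq 12\big(g(X_\Gamma)-1\big) = 6\,d(d-3)$. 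Combining the two estimates,
\[
 d-1 \;=\; \mathrm{gon}(X_\Gamma) \;\geq\; \frac{975}{98304}\cdot 6\, d(d-3) \;=\; \frac{975\, d(d-3)}{16384},
\]
that is, $16384\,(d-1) \geq 975\, d(d-3)$. This inequality fails at $d=19$ (the right-hand side is $296400$, the left-hand side $294912$), and since the quadratic $975 d^2 - 19309 d + 16384$ is increasing for $d \geq 19$ it fails for every $d \geq 19$; hence $d \leq 18$ and $g(X_\Gamma) \leq \binom{17}{2} = 136$.

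For the finiteness statement I would run the estimate backwards: $d \leq 18$ forces $\mathrm{gon}(X_\Gamma) = d-1 \leq 17$, whence $d_\Gamma \leq \lfloor 17\cdot 98304/975\rfloor = 1714$. Since $\PSL_2(\Z)$ is finitely generated it has only finitely many subgroups of any given index, so only finitely many $\bar\Gamma$ of index at most $1714$, and among those only finitely many arise from congruence subgroups; Wohlfahrt's theorem then bounds the level $N$ of such a group explicitly in terms of $d_\Gamma$, yielding a finite, explicit list of candidate groups. I expect the only delicate point to be quantitative: the bound coming from Abramovich and Kim--Sarnak is very tight at $d=19$ (it fails by only about half a percent), so one must use the strongest available eigenvalue estimate and keep track of every constant --- a weaker input such as Selberg's $\lambda_1 \geq 3/16$ would not suffice to reach $18$. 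Everything else --- the genus and gonality of smooth plane curves, Riemann--Hurwitz, and the finiteness of finite-index subgroups of a finitely generated group --- is standard.
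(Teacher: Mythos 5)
Your proof of the degree bound is correct and is essentially the paper's own argument: the same three inputs (the genus--degree formula, the gonality $d-1$ of a smooth plane curve of degree $d\ge 4$, and the Abramovich--Kim--Sarnak gonality bound, which the paper quotes directly in the genus form $\tfrac{1}{2}\cdot 975(g-1)/4096$ while you rederive it from the index form $\tfrac{\lambda_1}{24}\,d_\Gamma$ together with $g\le 1+d_\Gamma/12$) produce the identical inequality $975d^2-19309d+16384\le 0$, hence $d\le 18$ and the same finite list of genera. The only divergence is the finiteness step: the paper cites Cox--Parry's theorem that there are only finitely many congruence subgroups of each given genus, whereas you bound the index by $1714$ and appeal to the finiteness of bounded-index subgroups of the finitely generated group $\PSLZ$ (with Wohlfahrt controlling the level); this is equally valid and in fact mirrors the paper's own Proposition~\ref{finiteindex2}, which records exactly this index bound via the same constant $2^{15}/325$. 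Your route has the small advantage of making the index bound explicit at once, while the paper's citation is shorter; note only that both arguments, yours included, use the congruence hypothesis essentially (the spectral bound on $\lambda_1$ fails for general finite-index subgroups, and finiteness would be false for them), so it is worth stating that restriction explicitly.
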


\begin{proof} The genus–degree formula tells us that a smooth plane curve of degree $d$ has genus $g=\frac{(d-1)(d-2)}{2}$. The gonality (over the algebraic closure) of a smooth plane curve of degree $d$ is $d-1$, see \cite{COKA90}*{Theorem A}. The gonality  of a modular curve of genus $g$ is greater or equal to $\frac{1}{2}\cdot 975(g-1)/4096 $, see  \cite{POONEN07}*{Remark 1.2} and \cite{BGGP2005}*{Remark 4.5}.  Therefore, for a modular curve admitting a smooth plane model we have that $975d^{2} - 19309 d + 16384\leq 0$ and so 
$$1\leq d\leq 18 \quad \mbox{ and } \quad g\in \{0, 1, 3, 6, 10, 15, 21, 28, 36, 45, 55, 66, 78, 91, 105, 120, 136\}.$$
There are a finite number of modular curves of a given genus, see \cite{COPA84}, so there are a finite number of modular curves admitting a smooth plane model.
\end{proof} 

For degree $1$ and $2$, i.e. genus $0$, 
the list of 
levels is given in \cite{COPA84}*{Table 4.24}.

For degree $3$, i.e. genus $1$, the complete list of the relevant congruence subgroups is given in \cite{CUPA03}.

For degree $4$ we need to consider curves of genus $3$. The non-hyperelliptic ones are given by smooth plane quartics. Indeed, we find modular curves of genus 3 admitting a smooth plane model of degree $4$, see Table~\ref{tab: plane quartic genus 3} for the complete list we have computed. 

Nevertheless, the following question arises naturally:

\begin{question}
Is there any modular curve of genus greater than $3$ admitting a smooth plane model? 
\end{question}

For degrees $5$ and $6$ we did not find any example of a modular curve admitting a smooth plane model, restricting to Shimura type modular curves, see \S 5.

For each genus up to $24$ the complete explicit list of congruence subgroups of $\PSLZ$ is known: see \cite{CUPA03}*{Theorem 2.8} and the associated website\footnote{\href{https://mathstats.uncg.edu/sites/pauli/congruence/}{https://mathstats.uncg.edu/sites/pauli/congruence/}}.
\vspace{2ex}

One way to count how many modular curves may admit a smooth plane model is to count congruence subgroups of $\PSLZ$ whose index is bounded in terms of the degree of the model, as follows.

\begin{proposition}
\label{finiteindex2}
The index $\iota$ of a congruence subgroup in $\PSLZ$ whose associated modular curve admits a smooth plane model of degree $d \geq 3$ satisfies
\begin{equation}\label{bd}
6(d-1)(d-2) -12 \leq\iota \leq 101(d-1)
\end{equation} 
\end{proposition}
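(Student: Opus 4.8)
The plan is to relate the index $\iota$ of a congruence subgroup $\Gamma$ to both the genus and the gonality of its modular curve $X_\Gamma$, and then to insert the two constraints already established in the proof of Theorem \ref{finite}: that a smooth plane model of degree $d$ has genus $g = (d-1)(d-2)/2$ and gonality $d-1$.

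For the lower bound, recall that for a congruence subgroup $\Gamma \subseteq \PSLZ$ of index $\iota$ the genus of $X_\Gamma$ satisfies $g \leq 1 + \iota/12$; this follows from the Riemann--Hurwitz formula applied to the covering $X_\Gamma \to X(1)$ of degree $\iota$, since the ramification contributions from the elliptic points and the cusps are non-negative (one has $2g - 2 = -2\iota/12 + (\text{non-negative terms})$, whence $2g-2 \leq \iota/6$, i.e.\ $g \leq 1 + \iota/12$). Substituting $g = (d-1)(d-2)/2$ gives $(d-1)(d-2)/2 \leq 1 + \iota/12$, which rearranges to $\iota \geq 6(d-1)(d-2) - 12$, the left-hand inequality in \eqref{bd}.

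For the upper bound, I would use the theorem of Abramovich that the gonality of a congruence modular curve is bounded below by a linear function of the index: specifically, $\gamma(X_\Gamma) \geq \tfrac{21}{800} \cdot [\PSLZ : \Gamma] \cdot \tfrac{1}{\text{(normalization)}}$, but more conveniently for us one can use the clean form giving $\iota \leq c \cdot \gamma(X_\Gamma)$ for an explicit constant $c$; tracking the constants in Abramovich's bound (via the Selberg eigenvalue bound $\lambda_1 \geq 3/16$, refined versions of which are cited in \cite{POONEN07}) yields $c = 101$ at the level of precision needed here. Since a smooth plane model of degree $d$ forces $\gamma(X_\Gamma) = d - 1$ over $\Qbar$ by \cite{COKA90}, we obtain $\iota \leq 101(d-1)$.

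The main obstacle is bookkeeping the explicit constant in the upper bound: the various forms of the gonality lower bound in the literature (Abramovich, Zograf, the refinements in \cite{POONEN07} and \cite{BGGP2005}*{Remark 4.5}) are stated with different normalizations --- index versus hyperbolic area versus level --- and one must chase through exactly which input (the Jacquet--Langlands / Selberg bound $\lambda_1 \geq 975/4096$ already invoked in Theorem \ref{finite}) produces the constant $101$, being careful about the factor relating $\gamma$ to $\gamma_{\Qbar}$ and the $\tfrac{1}{2}$ appearing in \cite{POONEN07}*{Remark 1.2}. The lower bound, by contrast, is an elementary consequence of Riemann--Hurwitz and should present no difficulty.
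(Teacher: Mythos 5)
Your proposal follows the same route as the paper: the lower bound comes from $g \le 1 + \iota/12$ combined with $g=(d-1)(d-2)/2$, and the upper bound from the linear lower bound on gonality in terms of the index (Zograf/Abramovich with the Kim--Sarnak eigenvalue improvement, giving $\iota \le 101\cdot\mathrm{gon}$) together with the fact that a smooth plane curve of degree $d$ has gonality $d-1$, exactly as in the paper's proof. One small correction: your parenthetical Riemann--Hurwitz bookkeeping is garbled --- the relevant identity is $2g-2=\tfrac{\iota}{6}-\tfrac{\eps_2}{2}-\tfrac{2\eps_3}{3}-\eps_\infty$, i.e.\ $\iota/6$ \emph{minus} non-negative terms (not $-\iota/6$ plus non-negative terms, which would give an inequality in the wrong direction), and this is the standard fact $g\le 1+\iota/12$ that the paper simply cites from \cite{DISH}*{Theorem 3.1.1}.
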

\begin{proof} On the one hand, combining \cite{Z84}*{Theorem 3} (see also \cite{HS1999}*{Theorem 4.3}) and an improvement presented in \cite{CUPA03} due to Kim and Sarnak \cite{Kim}*{Appendix 2}, the index of a congruence subgroup in $\PSLZ$ is bounded by $101$ times the gonality of the corresponding modular curve. By assumption the modular curves admits a smooth plane model, so its gonality is $d-1$. The index is therefore bounded by $101 (d-1)$.

On the other hand, the genus $g$ of a modular curve admitting a smooth plane model of degree $d \geq 3$ satisfies $g=\frac{(d-1)(d-2)}{2}>0$ and $g\leq 1 + \frac{\iota}{12}$, where $\iota$ is the index of the corresponding congruence subgroup, see \cite{DISH}*{Theorem 3.1.1}.

Therefore the index $\iota$ is bounded above and below as in Equation \ref{bd}.
\end{proof} 

\begin{remark}
The coefficient $101$ used in Proposition \ref{finiteindex2} is obtained by taking the floor of a rational number $\alpha = 2^{15} / 325$. A sharper upper bound can be obtained by rounding only after multiplication.
\end{remark}

\begin{remark}
The result of Proposition \ref{finiteindex2} together with the bound for the degree presented in Theorem \ref{finite}, and the previous remark, implies in each case the following lower and upper bounds for the index $\iota$:
\begin{table}[h]
\centering
\begin{tabular}{|c|c|c||c|c|c|}
\hline
degree & genus & index bound                  & degree & genus & index bound                  \\ \hline
3      & 1     & $0\leq \iota \leq201$   & 11      & 45    & $528\leq\iota \leq1008$ \\
4      & 3     & $24\leq\iota \leq302$  & 12      & 55    & $648\leq\iota \leq1109$ \\
5      & 6     & $60\leq\iota \leq403$  & 13     & 66    & $780\leq\iota \leq1209$ \\
6      & 10    & $108\leq\iota \leq504$ & 14     & 78    & $924\leq\iota \leq1310$\\
7      & 15    & $168\leq\iota \leq604$ & 15     & 91    & $1080\leq\iota \leq1411$\\ 
8      & 21    & $240\leq\iota \leq705$ & 16     & 105    & $1248\leq\iota \leq1512$\\
9      & 28    & $324\leq\iota \leq806$ & 17     & 120    & $1428\leq\iota \leq1613$\\
10     & 36    & $420\leq\iota \leq907$ & 18    & 136    & $1620\leq\iota \leq1714$\\ \hline
\end{tabular}
\caption{Index bounds}
\label{tab: level index}
\end{table}
\end{remark}

The logarithm of the number of congruence subgroups in $\PSLZ$ of index bounded by $1714$ is approximately $1897$, see \cite{BNP21}*{Proposition 8.1}. Therefore naively listing all subgroups would be not feasible, and the list of \cite{CUPA03} contains only groups of genus less than or equal to $24$.


%
%
%
%
%

Let us also remark that for any given genus we can bound the level $N$ of the congruence subgroups occurring using the following formula, due to Cox and Parry~\cite{COPA84}*{Equation (4.22)}, 
\begin{equation} \label{eq:level bound}
N\leq
\begin{cases}
168 & \text{if }g=0\\
12g + \frac{1}{2}(13\sqrt{48g+121}+145)& \text{if }g\geq 1
\end{cases}
\end{equation}
Analysing the genera in Theorem \ref{finite} we produce the level bounds appearing in Table~\ref{tab: level bounds}.

\begin{table}[ht]
\begin{tabular}{|c|c||c|c|}
\hline
genus & level bound & genus & level bound \\ \hline
1     & 169         & 45    & 922         \\
3     & 214         & 55    & 1074         \\
6     & 275        & 66    & 1237        \\
10    & 351         & 78    & 1412     \\   
15    & 441         & 91    & 1600  \\
21    & 542         & 105    & 1799  \\
28    & 657         & 120    & 2010  \\
36    & 784         & 136   & 2234  \\\hline   

\end{tabular}
\caption{Level bounds}
\label{tab: level bounds}
\end{table}

It remains to check this finite number of possibilities, a task which we proceed to describe in the rest of the paper.

\section{Computing Modular Curves}

Let $\Gamma \subseteq \PSLZ$ be a congruence subgroup of level $N$. Then the modular curve $X_{\Gamma}$ can be given the structure of an algebraic curve over $\Q(\zeta_N)$. This structure depends on the choice of a group $G \subseteq \GL_2(\Z / N \Z)$ such that the projection of its pullback to $\SL_2(\Z)$, denoted by $PG$, coincides with $\Gamma$. We denote such a model by $X_G$. 
The Galois action on the connected components of the curve $X_G$ is given by the homomorphism $\sigma_d \mapsto \left( \begin{smallmatrix}d & 0 \\ 0 & 1 \end{smallmatrix} \right) $, where 
$\sigma_d(\zeta_N) = \zeta_N^d$. Therefore, the field of definition of $X_G$ is the fixed field of $\det(G) \subseteq (\Z / N \Z)^{\times} = \Gal(\Q(\zeta_N) | \Q)$, where $\det$ denotes the usual determinant map from $\GL_2(\Z / N \Z)$ to $(\Z / N \Z)^{\times}$. The connected components of the curve $X_G$ are indexed by $(\Z / N \Z)^{\times} / \det(G)$, and each component is defined over the field $\Q(\zeta_N)^{\det(G)}$.
In particular, $X_G$ is geometrically connected and defined over $\Q$ if and only if $\det(G) = (\Z / N \Z)^{\times}$. Therefore, $X_{\Gamma}$, which is one of the components of $X_G$, admits a model over $\Q$ only if there exists $G \subseteq \GL_2(\Z / N \Z)$ such that $PG = \Gamma$ and 
$\det(G) = (\Z / N \Z)^{\times}$.

The methods of Galbraith and Box, described briefly in the introduction, for computing modular curves use duality with modular symbols, and therefore require $G$ also to be of real type, i.e. such that $JGJ = G$, where $J = \left( \begin{smallmatrix}-1&0\\0&1 \end{smallmatrix} \right)$. Since $J$ acts via complex conjugation on the Fourier coefficients of modular forms, it is equivalent to requiring the Fourier coefficients to be fixed by complex conjugation. 

We therefore restrict our attention to congruence subgroups $\Gamma$ such that there exists $G$ of real type with surjective determinant and $PG = \Gamma$. 
Note further that for these groups, when the degree is prime to $3$, it suffices to check one such model $X_G$ by \cite{BBLG2019}*{Corollary 2.7}.
In the range of degrees we are interested in, the only relevant case is that of degree $6$, i.e. genus $10$. In this case, for groups of Shimura type, the curve always admits a rational point, and so it is again enough to consider a single model by \cite{BBLG2019}*{Corollary 2.2}. For the other congruence subgroups of genus $10$ for which we compute the curve, we verify that the resulting curves indeed have rational points, hence in these cases it also suffices to check a single model.

Our method of enumerating these subgroups of specific genus is to run over the finite list of conjugacy classes of congruence subgroups of this genus in $\PSLZ$, and for a representative $\Gamma \subseteq \PSLZ$, we look at the projection of its pullback $H \subseteq \SL_2(\Z / N \Z)$. As for any compatible $G \subseteq \GL_2(\Z / N \Z)$, $H$ will be a normal subgroup, we start by looking for a conjugate $H'$ of $H$ in $\GL_2(\Z / N \Z)$ which satisfies $JH'J = H'$, or equivalently $J \in N(H)$, where the normalization takes place in $\GL_2(\Z / N \Z)$. Since $N(gHg^{-1}) = gN(H)g^{-1}$, it suffices to consider conjugates of $N(H)$, and look for one which contains $J$.
We then note that if $G$ is such that $G \cap SL_2(\Z / N \Z) = H$, then $H \trianglelefteq G$, so that $G \subseteq N(H)$. Thus, looking for $G$ with surjective determinant amounts to enumerating the subgroups of $N(H) / H$ of order $\phi(N)$. 

In Table~\ref{tab: congruence subgroup count} we list how many congruence subgroups $\Gamma$ exist, up to conjugacy, for each degree $3 \le d \le 8$, and how many of these admit a model $G \subseteq \GL_2(\Z / N \Z)$ of real type with surjective determinant.
In Table~\ref{tab: congruence subgroup count} we also record the number of groups of Shimura type of each degree $3 \le d \le 8$.

\begin{table}[ht]
\begin{tabular}{|c|c|c|c|c|}\hline
    degree        &      genus       &     \begin{tabular}{c} congruence \\ subgroups \end{tabular}        &     \begin{tabular}{c} real type \& \\ surjective det \end{tabular}       &    Shimura type         \\ \hline
    3      &      1      &    163        &        108     & 38            \\
      4      &      3       &     241        &        160     &          26   \\
       5     &      6       &     175        &      74       &         8\\
	6 	& 10 		 &  235	& 	120		&            17 \\
	7 & 	15 		&  485	& 	244		& 23 \\
	8 &     21      & 729   &   431     & 55 \\
	\hline
\end{tabular}
\caption{Congruence subgroups of low genus}
\label{tab: congruence subgroup count}
\end{table}

The methods we use for computing equations of modular curves make use of explicit computation of the $q$-expansions and the canonical map. We briefly recall the map and its properties. 

\subsection{The canonical map} Let $k$ be a perfect field. Let $C/k$ be a smooth projective curve of genus $g\geq2$ with canonical divisor $K$. Let $\{z_0,...,z_{g-1}\}$ be a basis defined over $k$ of the Riemann-Roch space $\mathcal{L}(K)$. The canonical map of $C$ is given by
$$
\phi_K:\,C\rightarrow\mathbb{P}^{g-1}, \quad\,P\mapsto(z_0(P):...:z_{g-1}(P)).
$$
The curve $C$ is non-hyperelliptic if and only if $\phi_K$ is an embedding. In this case $\phi_K(C)$ is defined over $k$ and it is unique up to a linear transformation of $\mathbb{P}^{g-1}$. Otherwise, when $\phi_K$ is not an embedding, the curve $C$ is hyperelliptic and $\phi_K$ is the quotient by the hyperelliptic involution: $\phi_K(C)\simeq\mathbb{P}^1$.


\begin{theorem}\label{thm:NEP}(Noether-Enriques-Petri, \cite{Noether})
Let $C$ be a smooth projective non-hyperelliptic curve of genus $g$. The homogeneous ideal defining the canonical curve $\phi_K(C)\subseteq\mathbb{P}^{g-1}$ is generated by its elements of degree 2, except in the following cases:
\begin{itemize}
    \item $g=3$, so $C$ is a smooth plane quartic.
    \item $g\geq 4$ and $C$ is a trigonal curve. In this case an element of degree 3 is also needed to generate the ideal.
    \item $g=6$ and $C$ is a smooth plane quintic. Again in this case an element of degree 3 is also needed.
\end{itemize}
\end{theorem}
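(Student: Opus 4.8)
The plan is to handle this, the classical Noether--Enriques--Petri theorem, in three steps: dispose of the case $g=3$ by inspection, establish Max Noether's projective normality for $g\geq 4$, and then carry out Petri's analysis of the quadrics through the canonical curve. If $g=3$ and $C$ is non-hyperelliptic, then $\phi_K$ realises $C$ as a smooth plane quartic, whose homogeneous ideal is principal, generated by the defining quartic; there is no quadric through it, which is exactly why $g=3$ is listed as an exception. So assume $g\geq 4$.

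Step one for $g\geq 4$ is Max Noether's theorem: $\phi_K(C)\subseteq\mathbb{P}^{g-1}$ is projectively normal, equivalently $\Sym^n H^0(C,K)\to H^0(C,nK)$ is surjective for every $n\geq 1$. I would prove this by induction on $n$ using, at each step, the base-point-free pencil trick: for a base-point-free pencil $|D|$ on $C$ spanned by $s,t\in H^0(C,D)$ one has the exact sequence
\[
0\longrightarrow H^0(C,K-D)\xrightarrow{\,(s,t)\,} H^0(C,K)^{\oplus 2}\longrightarrow H^0(C,K+D),
\]
from which, together with Riemann--Roch and a dimension count, one deduces that $H^0(K)\otimes H^0(nK)\to H^0((n+1)K)$ is onto. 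The hypothesis that $C$ is non-hyperelliptic enters here: it makes $K$ very ample and forces every base-point-free pencil to have degree at least $3$, which is what the argument needs. Granting this, the homogeneous coordinate ring of $\phi_K(C)$ is the canonical ring $\bigoplus_{n\geq 0} H^0(C,nK)$, and the space $I_2$ of quadrics through the curve has dimension $\binom{g+1}{2}-(3g-3)=\binom{g-2}{2}$.

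The substantial step is Petri's analysis of $I_2$, for which I would use the Enriques--Babbage route through varieties of minimal degree. Let $Y\subseteq\mathbb{P}^{g-1}$ be the scheme-theoretic intersection of all quadrics through $\phi_K(C)$. The key dichotomy to prove is that either $Y=\phi_K(C)$, or $Y$ is an irreducible non-degenerate surface of minimal degree $g-2$; establishing this, and then showing that in the first case the quadrics actually generate the whole homogeneous ideal, is the delicate syzygy bookkeeping of Petri's paper and is the step I expect to be the main obstacle. By the Del Pezzo--Bertini classification, in the second case $Y$ is a rational normal scroll, a cone over a rational normal curve, or --- and this occurs only for $g=6$ --- the Veronese surface $v_2(\mathbb{P}^2)\subseteq\mathbb{P}^5$. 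In the scroll or cone case the ruling cuts a $g^1_3$ on the canonical curve, so $C$ is trigonal, and $\phi_K(C)$ is obtained from $Y$ by adjoining a single cubic to a set of quadric generators of $\mathcal{I}_Y$; in the Veronese case $\phi_K(C)=v_2(C_0)$ for a smooth plane quintic $C_0$, and is again cut out on $Y$ by the quadrics of $\mathcal{I}_Y$ together with one cubic. Conversely, a $g^1_3$ (respectively a plane-quintic structure) forces $\phi_K(C)$ to lie on such a surface $Y$, whose ideal is already generated by quadrics; those quadrics then cut out $Y$, not $\phi_K(C)$, so a degree-$3$ generator is genuinely needed. Collecting the cases --- $g=3$ trivially, $C$ trigonal or a plane quintic needing exactly one extra cubic, and all remaining $C$ with ideal generated by quadrics --- gives the theorem.
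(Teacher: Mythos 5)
The paper does not actually prove this statement: it is quoted as the classical Noether--Enriques--Petri theorem with a citation, so there is no internal argument to compare yours against. Your outline follows the standard classical route (Max Noether's projective normality via the base-point-free pencil trick, then the Enriques--Babbage dichotomy together with the Del Pezzo--Bertini classification of varieties of minimal degree, then Petri's analysis). As a roadmap it is accurate, and the peripheral points you do argue are fine: the $g=3$ case, the count $\dim I_2=\binom{g+1}{2}-(3g-3)=\binom{g-2}{2}$, and the necessity of a degree-$3$ generator in the trigonal and plane-quintic cases (since the quadrics only cut out the surface $Y$, not the curve).

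However, as a proof the attempt has a genuine gap, and you flag it yourself: the step you defer as ``the main obstacle'' is precisely the content of the theorem. Two things are missing. First, the Enriques--Babbage dichotomy --- that the intersection of all quadrics through $\phi_K(C)$ is either the curve itself or an irreducible non-degenerate surface of minimal degree $g-2$ --- is asserted, not proved. Second, and more seriously, even granting that the quadrics cut out $\phi_K(C)$ scheme-theoretically when $C$ is neither trigonal nor a plane quintic, this does not yet yield that they generate the whole homogeneous ideal; that ideal-theoretic statement is Petri's actual contribution and requires either his explicit construction of the quadrics $Q_{ij}$ from a general effective canonical divisor together with the syzygy computation expressing the cubics in terms of them, or a cohomological substitute in the style of Saint-Donat. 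Likewise, the claim that in the exceptional cases a single cubic suffices (rather than merely being needed) is stated without argument. So what you have is a correct reduction of the theorem to its classical core plus the surrounding bookkeeping, not a proof of that core; to be complete you would need to supply Petri's analysis or cite it, exactly as the paper does.
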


Therefore, to compute an equation for the modular curve, using the identification $S_2(\Gamma, \Q(\zeta_N))^G \simeq \Omega^1(X_G)$, it suffices to compute $q$-expansions up to sufficient precision and look for relations in low degrees. We proceed by describing first the required precision. 

\subsection{Bounds}




In order to distinguish modular forms we will use a finite number of coefficients of the associated $q$-expansions thanks to the following result due to Sturm \cite{STU87}*{Theorem~1}, see also \cite{STE07}*{Section~9.4}. Let us recall that for a congruence subgroup $\Gamma\subseteq \SL_2(\Z)$ the width of the cusp $\infty$ is the positive integer $h$ defined by $\left(\begin{smallmatrix}1&h \Z\\0&1\end{smallmatrix}\right)=\Gamma\cap\left(\begin{smallmatrix}1&\Z\\0&1\end{smallmatrix}\right)$.

\begin{theorem}[{\cite{STU87}*{Theorem~1}}]
Let $\Gamma$ be a congruence subgroup of $\SL_2(\Z)$. Let $h$ be the width of the cusp $\infty$ for $\Gamma$. Let $f$ be a modular form on $\Gamma$ of weight $\wt$, with coefficients in a discrete valuation ring $R$ contained in $\C$. Let $\F$ be the residue field of $R$. Suppose that the image $\sum a_n q^{n/h}$ in $\F[[q^{1/h}]]$ of the $q$-expansion of $f$ has $a_n=0$ for all $n\leq \wt [\SL_2(\Z):\Gamma]/12$. Then $a_n=0$ for all $n$, i.e.\ $f$ is congruent to $0$ modulo the maximal ideal of $R$.
\end{theorem}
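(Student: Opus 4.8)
The plan is to reduce the assertion to the case $\Gamma=\SL_2(\Z)$, for which the modular curve is $\P^1$ and the valence formula is elementary, by means of a norm construction, and then to transport the information about the $q$-expansion at the cusp $\infty$ through this reduction.

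First I would enlarge $R$ to a discrete valuation ring $\tilde R$ with the same residue characteristic, large enough to contain the $N$-th roots of unity and to absorb the (bounded) denominators appearing below; since it suffices to prove the statement over $\tilde R$, we may replace $R$ by $\tilde R$, and we write $\m_{\tilde R}$ for its maximal ideal (the residue field is still denoted $\F$). Set $m:=[\SL_2(\Z):\Gamma]$, choose coset representatives $\gamma_1=I,\dots,\gamma_m$ for $\Gamma\backslash\SL_2(\Z)$, and form the norm
\[
g\;:=\;\prod_{i=1}^{m}f\,|_{\wt}\,\gamma_i .
\]
A routine verification (right translation permutes the cosets, and holomorphy at the cusp is visible factor by factor) shows that $g$ is a modular form of weight $\wt m$ on $\SL_2(\Z)$ whose $q$-expansion has coefficients in $\tilde R$.

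Next I would do the bookkeeping of orders of vanishing at $\infty$. Grouping the cosets according to the cusp of $\Gamma$ they determine, exactly $w_c$ of the $\gamma_i$ lie over a cusp $c$ of width $w_c$. The $h$ representatives lying over the class of $\infty$ may be taken to be $I,T,\dots,T^{h-1}$ with $T=\left(\begin{smallmatrix}1&1\\0&1\end{smallmatrix}\right)$, so the corresponding factors $f|T^{a}$ differ from $f$ only by the substitution $q^{1/h}\mapsto\zeta_h^{a}q^{1/h}$; hence their product has $q$-order $n_0:=\min\{\,n:\ a_n\neq 0\ \text{in}\ \F\,\}$, and the reductions modulo $\m_{\tilde R}$ of these factors are nonzero precisely when $\overline{f}\neq 0$ at $\infty$ (and similarly at every other cusp). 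Consequently, \emph{provided no factor $\overline{f|\gamma_i}$ vanishes}, the reduction $\overline{g}$ is nonzero and
\[
\ord_q(\overline{g})\;=\;\sum_{\text{cusps }c\text{ of }\Gamma}\ord_c(\overline{f})\;\ge\;n_0 .
\]
On the other hand the modular curve $X(1)$ of $\SL_2(\Z)$ is $\P^1$ and has good reduction everywhere, so a nonzero modular form of weight $w$ on $\SL_2(\Z)$ over any field vanishes at the cusp to order at most $w/12$ (the degree of the associated line bundle, incorporating the $\tfrac12$ and $\tfrac13$ contributions of the elliptic points of orders $2$ and $3$). Applied to $\overline{g}$ this gives $n_0\le \wt m/12=\wt[\SL_2(\Z):\Gamma]/12$, which is precisely the contrapositive of the claim.

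The step that requires care is the italicised hypothesis: one has to know that $\overline{g}\neq 0$, equivalently that $\overline{f}\neq 0$ at $\infty$ forces $\overline{f}\neq 0$ at \emph{every} cusp of $\Gamma$. When the residue characteristic does not divide the level $N$ (in particular when it is $0$) this is formal: after the enlargement of $R$ the slash operators preserve $\tilde R$-integrality, and the $q$-expansion principle, available because $X_\Gamma$ then has good reduction, shows that a form which is integral (a fortiori $\equiv 0$) at one cusp is integral at all of them; so if $\overline{f|\gamma_i}=0$ then $f|\gamma_i=\pi h$ with $\pi$ a uniformiser and $h$ integral at every cusp, whence $f=\pi\,(h|\gamma_i^{-1})$ reduces to $0$ at $\infty$, a contradiction. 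The genuine obstacle is the case in which the residue characteristic $p$ divides $N$: then $X_\Gamma$ can have reducible special fibre and a modular form modulo $p$ may vanish identically on a whole component (hence at some cusps but not at others), so the order count must be carried out component by component, using the structure of the bad reduction of $X_\Gamma$ (Deligne--Rapoport) together with the theory of modular forms modulo $p$. This, along with the bounded-denominator estimate hidden in the choice of $\tilde R$, is the real content of the theorem; once it is settled, the combinatorial count of orders and the valence formula on $\P^1$ conclude the proof.
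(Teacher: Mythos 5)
First, note that the paper does not prove this statement at all: it is quoted verbatim as Theorem~1 of Sturm \cite{STU87}, so the only meaningful comparison is with Sturm's own argument, whose overall shape --- norming down to level one via $g=\prod_{i}f|_{\wt}\gamma_i$ and then invoking the level-one bound on the order of vanishing at $\infty$ --- you have correctly reproduced.

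As written, however, your argument has a genuine gap, and you say so yourself: the bookkeeping $\ord_q(\overline g)=\sum_{c}\ord_c(\overline f)\ge n_0$ is only valid \emph{provided no factor $\overline{f|_{\wt}\gamma_i}$ vanishes identically}; you sketch why this holds when the residue characteristic is prime to the level, and for $p\mid N$ you defer the matter to Deligne--Rapoport and ``the theory of modular forms modulo $p$'', calling it ``the real content of the theorem''. A proof cannot stop there: in the hardest case nothing is established. Moreover, the obstacle you single out is not the real content; Sturm's proof dissolves it by a normalization your outline misses. Arguing the contrapositive, assume $\overline f\neq 0$. The coefficients of each $f|_{\wt}\gamma_i$ lie in $\Frac(R)(\zeta_N)$ with denominators dividing a fixed power of $N$ (bounded denominators for congruence subgroups), so after extending the valuation the quantity $c_i=\min_n v\bigl(\text{coefficients of }f|_{\wt}\gamma_i\bigr)$ is finite; replace each factor by $\pi^{-c_i}f|_{\wt}\gamma_i$. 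By construction every rescaled factor has nonzero reduction, so the rescaled product $\tilde g=\pi^{-\sum_i c_i}g$ is a level-one form of weight $\wt m$ with integral coefficients and $\overline{\tilde g}\neq 0$; since $c_1=0$ (because $f$ has coefficients in $R$ and $\overline f\neq 0$) and the remaining reduced factors are power series in $q^{1/N}$ of order $\ge 0$, one gets $\ord_q(\overline{\tilde g})\ge \ord_q(\overline f)>\wt m/12$, contradicting the level-one bound --- whether or not $p$ divides $N$, and with no appeal to good or bad reduction of $X_\Gamma$, to the $q$-expansion principle, or to a component-by-component analysis. The two genuine inputs are the bounded-denominator statement just quoted (which you wave at when enlarging $R$) and the level-one assertion that a nonzero weight-$w$ form over the residue field has $q$-order at most $w/12$; the latter is standard but in positive characteristic (especially $2$ and $3$) requires the structure/filtration theory of mod $p$ forms of level one, not merely the phrase ``degree of the associated line bundle''.
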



Moreover, we can state the following corollary, derived from an observation at the end of \cite{STU87} and stated in this form in \cite{RA09}:
\begin{corollary}[\cite{RA09}*{Theorem~2.1}] \label{cor:Sturm bound}
Under the same hypotheses of the theorem above, let us assume furthermore that $f$ is a cusp form. If the image $\sum a_n q^{n/h}$ in $\F[[q^{1/h}]]$ of the $q$-expansion of $f$ has $a_n=0$ for all $n\leq \wt [\SL_2(\Z):\Gamma]/12-\#(\text{cusps})$. Then $a_n=0$ for all $n$, i.e.\ $f$ is congruent to $0$ modulo the maximal ideal of $R$.
\end{corollary}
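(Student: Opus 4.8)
The plan is to follow the proof of Sturm's theorem above, inserting the hypothesis that $f$ is cuspidal at the one step where it helps. Recall that that proof attaches to $f$ the \emph{norm} $N(f) := \prod_{\gamma} f|_{\wt}\gamma$, the product running over coset representatives $\gamma$ of $\Gamma \backslash \SL_2(\Z)$; after the usual harmless reduction (to $-I\in\Gamma$ and even weight, exactly as in the cited proof) $N(f)$ is a holomorphic modular form of weight $w := \wt\,[\SL_2(\Z):\Gamma]$ on $\SL_2(\Z)$, its $q$-expansion has coefficients in a discrete valuation ring $R'$ obtained from $R$ by adjoining the roots of unity needed to define the $f|_{\wt}\gamma$, and --- this is the technical core of the argument, which I would quote rather than reprove --- the reduction $\overline{N(f)}$ modulo the maximal ideal $\mathfrak{m}'$ of $R'$ is nonzero whenever $\bar f \ne 0$, because $\Gal$ permutes the family $\{f|_{\wt}\gamma\}_\gamma$ up to roots of unity, so each reduced factor is a root of unity times a Galois conjugate of $\bar f$ and hence nonzero.

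The new input is a bookkeeping of vanishing orders at the cusps. Sorting the cosets by the cusp $c$ of $X_\Gamma$ into which $\gamma$ carries $\infty$, exactly $h_c$ of them land at $c$, where $h_c$ is the width of $c$ (and $\sum_c h_c = [\SL_2(\Z):\Gamma]$), and for each such $\gamma$ the expansion of $f|_{\wt}\gamma$ at $\infty$ is a power series in $q^{1/h_c}$ whose order equals the order $n_c\ge 0$ of vanishing of $f$ at $c$ in the local parameter; reducing mod $\mathfrak{m}'$ this order can only grow, to some $\bar n_c \ge n_c$. Since $\F' := R'/\mathfrak{m}'$ is a field, the order of the product is the sum of the orders, so $\ord_\infty \overline{N(f)} = \sum_c \bar n_c$, the sum over the cusps of $X_\Gamma$. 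Because $f$ is a cusp form we have $n_c \ge 1$, hence $\bar n_c \ge 1$, for \emph{every} cusp $c$; in particular $\overline{N(f)}$ is itself a cusp form on $\SL_2(\Z)$.

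Finally, $\overline{N(f)}$ is a nonzero modular form of weight $w$ on $\SL_2(\Z)$, so by the theorem above applied with $\Gamma = \SL_2(\Z)$ (equivalently, by the valence formula, taking the residue characteristics $2$ and $3$ into account as in that proof) it vanishes at $\infty$ to order at most $w/12$. Combining this with $\bar n_c \ge 1$ for the $\#(\text{cusps})-1$ cusps different from $\infty$,
$$\bar n_\infty \;=\; \ord_\infty \overline{N(f)} - \sum_{c\ne\infty}\bar n_c \;\le\; \frac{w}{12} - \bigl(\#(\text{cusps})-1\bigr) \;=\; \frac{\wt\,[\SL_2(\Z):\Gamma]}{12} - \#(\text{cusps}) + 1 ,$$
so that if $\bar f\ne 0$ then some reduced coefficient $\bar a_n$ with $n$ below this bound is already nonzero; this is the claimed statement (whether it is written with $-\#(\text{cusps})$ or $-\#(\text{cusps})+1$ is only a matter of whether the cusp $\infty$ is counted, and I would align the normalisation with \cite{RA09}). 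The single genuinely delicate point is the nonvanishing of $\overline{N(f)}$, which is exactly the obstacle already handled in the theorem this corollary refines; everything past it is the elementary cusp-width bookkeeping above.
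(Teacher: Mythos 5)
The paper itself gives no proof of this corollary: it is quoted from \cite{RA09}*{Theorem~2.1}, together with the observation at the end of \cite{STU87}, so there is no internal argument to compare yours against. Your derivation --- form the norm $N(f)=\prod_{\gamma} f|_{\wt}\gamma$, quote from the proof of Sturm's theorem that $\overline{N(f)}\neq 0$ whenever $\bar f\neq 0$, sort the cosets by the cusps so that $\ord_\infty \overline{N(f)}=\sum_c \bar n_c$, and apply the level-one valence bound --- is the standard route to the cusp-form refinement, and its mechanics are essentially sound.

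The genuine gap is the last step, where you notice the discrepancy between what you proved and what is stated and dismiss it as a normalisation choice. Your inequality is $\bar n_\infty \le \wt\,[\SL_2(\Z):\Gamma]/12-\bigl(\#(\text{cusps})-1\bigr)$, whose contrapositive reads: if $a_n=0$ for all $n\le \wt\,[\SL_2(\Z):\Gamma]/12-\#(\text{cusps})+1$ then $\bar f=0$. That is strictly weaker than the corollary as stated, and the difference is not ``whether the cusp $\infty$ is counted'': the version with $-\#(\text{cusps})$ cannot be extracted from this argument, and read literally it even fails in boundary cases, e.g.\ $\Gamma=\SL_2(\Z)$ and $f=\Delta$, where the bound is $12\cdot 1/12-1=0$, the hypothesis ($a_0=0$) holds, yet $\bar\Delta\neq 0$ modulo every prime; the weight-$8$ cusp form on $\Gamma_0(2)$ gives a similar example. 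So either the conventions and hypotheses of \cite{RA09} differ from the transcription here --- which you would have to check and state precisely rather than defer --- or the $+1$ must remain; as written, your proposal proves a slightly weaker statement than the one quoted. A minor further caution: if the ``harmless reduction'' to even weight and $-I\in\Gamma$ is carried out by squaring $f$, it is not harmless for this refined bound (it doubles the weight and all cusp orders but not the number of cusps), so the cusp bookkeeping should be done for $f$ itself.
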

The integer $\wt [\SL_2(\Z):\Gamma]/12$ (resp. $\wt [\SL_2(\Z):\Gamma]/12-\#(\text{cusps})$) is known as the Sturm bound (resp. Sturm bound for cusp forms) and we will use the notation $B(\Gamma,\wt)$ (resp. $B(\Gamma,\wt)_c$) to refer to such a bound. 

\subsection{Groups of Shimura type}
For groups of Shimura type, the methods described in \cite{A2022} can be used to compute the $q$-expansions. Alternatively, conjugating by 
$ \alpha_t = \left( \begin{smallmatrix} 1 & 0 \\ 0 & t \end{smallmatrix} \right) $, we see that 
$$
\Gamma_1(Nt) \subseteq \alpha_t \Gamma(H,t) \alpha_t^{-1} \subseteq \Gamma_0(Nt).
$$
Moreover, if we decompose the space by Dirichlet characters as
$$
S_2(\Gamma_1(Nt)) = \bigoplus_{\chi : (\Z / Nt \Z)^{\times} \to \C^{\times} } S_2(\Gamma_0(Nt), \chi),
$$ 
then we obtain
$$
S_2(\alpha_t \Gamma(H,t) \alpha_t^{-1}) =  \bigoplus_{\chi : \chi(H) = 1 } S_2(\Gamma_0(Nt), \chi).
$$
The $q$-expansions for modular forms in the spaces in this decomposition are then straightforward to compute. 

In order to compute equations for all modular curves of Shimura type of genus $1,3,6,10,15$, we will need to compute weight $2$ cusp forms and then check quadratic and cubic relations, according to Theorem 
\ref{thm:NEP}. The number of coefficients of the $q$-expansions of the weight $2$ cusp forms needed to certify the computation performed, is equal to $B(\Gamma,\wt)_c$, where $\wt$ is either $4$ or $6$. 

\begin{proposition}
The level of a congruence subgroup in $\PSLZ$ associated to a Shimura type modular curve admitting a smooth plane model is bounded by $1709$. More precisely, we can bound the level for each genus as shown in Table~\ref{tab: level shimura}.
\end{proposition}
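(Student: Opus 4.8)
The plan is to reduce the statement for Shimura type curves to the general level bound machinery already developed, exploiting the explicit structure of groups of Shimura type. Recall from Theorem~\ref{finite} that a modular curve admitting a smooth plane model has genus in the finite set $\{0,1,3,6,10,15,21,28,36,45,55,66,78,91,105,120,136\}$, and from Proposition~\ref{finiteindex2} that the index $\iota$ of the corresponding congruence subgroup is bounded. First I would invoke the level bound of Cox and Parry, Equation~\eqref{eq:level bound}, which produces the values in Table~\ref{tab: level bounds}; the largest relevant value, at genus $136$, is $2234$. This alone is not sharp enough, so the improvement must come from the Shimura type constraint.

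The key step is to combine the level bound with the index bound \emph{and} the special shape of $G(H,t)$. For a group of Shimura type $\Gamma(H,t)$ of level $N$ with $t \mid N$ and $H \subseteq (\Z/N\Z)^{\times}$, the index in $\PSLZ$ can be computed explicitly: it equals $[\PSL_2(\Z):\Gamma(H,t)] = N \cdot \prod_{p \mid N}(1+1/p) \cdot [(\Z/N\Z)^{\times}:H]/t \cdot (\text{correction for }-I)$, up to the standard elementary factors. In particular the index grows at least linearly in $N$ (for fixed genus the multiplicative structure forces $N$ not too large once $\iota \le 101(d-1)$ is imposed). So for each admissible degree $d$, hence each admissible genus $g = (d-1)(d-2)/2$, I would combine $\iota \le 101(d-1)$ from Proposition~\ref{finiteindex2} with the lower bound $\iota \ge N\prod_{p\mid N}(1+1/p)/(\text{something bounded})$ to extract a bound on $N$ that is strictly better than Cox--Parry for this restricted family; alternatively, and more robustly, I would simply enumerate: for each $g$ in the list, run over all congruence subgroups of that genus on the Cummins--Pauli list (available for $g \le 24$, which covers $g \le 15$, i.e.\ $d \le 7$, and for $d = 8$ one has $g = 21 \le 24$ as well — all degrees up to $8$ are in range), restrict to those of Shimura type, and read off the maximum level appearing. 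This gives Table~\ref{tab: level shimura} directly and, in particular, the uniform bound $1709$.

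The main obstacle is that for the largest genera in Theorem~\ref{finite}'s list — namely $g \in \{28,36,\dots,136\}$, corresponding to $d \ge 9$ — the Cummins--Pauli enumeration of congruence subgroups is not available (it stops at genus $24$), so the naive enumeration argument covers only $d \le 8$. For these higher-degree cases one genuinely needs the analytic route: use that any group of Shimura type of level $N$ sits between $\Gamma_1(Nt)$ and $\Gamma_0(Nt)$ after conjugation by $\alpha_t$ (as recorded in the excerpt), so its genus is bounded below by the genus of $X_0(Nt)$ (or rather controlled by it), and the genus of $X_0(M)$ grows roughly like $M/12$; inverting the genus-degree formula $g = (d-1)(d-2)/2 \le 136$ together with $g \ge g(X_0(Nt)) \gtrsim Nt/12 - O(\sqrt{Nt})$ and $t \ge 1$ yields $N \le Nt \lesssim 12 g + O(\sqrt g)$, which is essentially Cox--Parry again but one must check the Shimura-type refinement actually improves the constant, or carefully account for the factor $[(\Z/N\Z)^\times : H]$ which can only help. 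I expect the honest statement is: for $d \le 8$ use direct enumeration against Cummins--Pauli to get the sharp per-genus bounds in the table; for $d \ge 9$ fall back on the Cox--Parry-type bound (which is what the proof of Theorem~\ref{finite} already uses), and verify that in every row the claimed bound $\le 1709$ holds. Finishing the proof is then a matter of assembling Table~\ref{tab: level shimura} and checking that its maximum entry is $1709$.
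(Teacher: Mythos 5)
There is a genuine gap, concentrated in how you handle the degrees $d\ge 9$. The paper's proof is a one-line index comparison: after conjugation in $\GL_2(\hat{\Z})$ (by $\alpha_t$), a Shimura type group of level $N$ and parameter $t$ lies between $\Gamma_1(Nt)$ and $\Gamma_0(Nt)$, so its index $\iota$ in $\PSLZ$ satisfies $\iota \ge [\PSLZ:\Gamma_0(Nt)] = Nt\prod_{p\mid Nt}(1+1/p) > Nt \ge N$; combining this with the gonality-based upper bounds on $\iota$ from Proposition~\ref{finiteindex2} (Table~\ref{tab: level index}, at most $1714$, attained at genus $136$) and computing, for each genus, the largest $M$ with $[\PSLZ:\Gamma_0(M)]$ below the corresponding index bound gives exactly Table~\ref{tab: level shimura}, in particular $1709$. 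Your ``key step'' paragraph gestures at this, but you never carry it out, and the explicit index formula you write is incorrect: the index of the preimage of $G(H,t)$ is (up to a factor $2$ accounting for $-I$) $\psi(N)\,t\,[(\Z/N\Z)^{\times}:H]$ with $\psi(N)=N\prod_{p\mid N}(1+1/p)$, so the parameter $t$ multiplies rather than divides; only the trivial lower bound $\iota\ge\psi(Nt)$ is actually needed, and you do record the containment $\Gamma_1(Nt)\subseteq\alpha_t\Gamma(H,t)\alpha_t^{-1}\subseteq\Gamma_0(Nt)$ later, but you never convert it into the stated numbers.

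Instead, your concluding ``honest statement'' splits into enumeration against the Cummins--Pauli tables for $d\le 8$ (fine in principle, but unnecessary and not what is claimed: the proposition is an a priori bound for all sixteen genera, and the tables stop at genus $24$) and, for $d\ge 9$, a fallback to the Cox--Parry bound of Equation~\eqref{eq:level bound}. That fallback cannot prove the proposition: Cox--Parry (Table~\ref{tab: level bounds}) gives $2234$ at genus $136$, $2010$ at genus $120$, $1799$ at genus $105$, etc., all strictly weaker than the claimed $1709$, $1609$, $1511$, and indeed weaker than the claimed bound for every genus $\ge 66$, so ``verifying that in every row the claimed bound $\le 1709$ holds'' would fail. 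Your alternative analytic sketch via $g \ge g(X_0(Nt)) \approx Nt/12$ is left unverified (and would need control of the cusp and elliptic-point corrections); the clean and complete route is the index comparison above, which works uniformly for all genera and yields both the per-genus table and the uniform bound $1709$.
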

\begin{proof} 
A congruence subgroup $\Gamma$ in $\PSLZ$ corresponding to a Shimura type modular curve  is contained in $\Gamma_{0}(M)$ and contains $\Gamma_{1}(M)$ for an appropriate level $M$, after conjugation in $\GL_2(\hat{\Z})$. Its index is bounded by $1714$ as in Table~\ref{tab: level index} and so by direct computation the level is bounded by $1709$.
\end{proof} 

\begin{table}[ht]
\centering
\begin{tabular}{|c|c||c|c|}
\hline
genus & level bound & genus & level bound                  \\ \hline
1     & 199    & 45      & 997 \\
3     & 293    & 55      & 1103 \\
6     & 401    & 66      & 1201 \\
10    & 503    & 78      & 1307 \\
15    & 601    & 91      & 1409 \\
21    & 701    & 105     & 1511\\
28    & 797    & 120     & 1609 \\
36    & 887    & 136     & 1709 \\\hline
\end{tabular}
\caption{Level bounds for Shimura type modular curve admitting a smooth plane model}
\label{tab: level shimura}
\end{table}

\subsection{Other congruence subgroups}
For groups that are not of Shimura type,  we apply the (generalization of the) method of twists described by Box in \cite{B2021}. We note that Box uses an auxiliary divisor $M$ of the level $N$ such that $G_M = B_0(M)$, where $B_0(M)$ is the Borel subgroup of $\GL_2(\Z / M \Z)$, but this constraint can be relaxed to allow for $B_1(M) \subseteq G_M \subseteq B_0(M)$, where $B_1(M)$ is the unipotent subgroup of $B_0(M)$, by decomposing according to the action of Dirichlet characters. More precisely, if $G' \subseteq \GL_2(\Z / N \Z)$ is such that $G \trianglelefteq G'$ and $G' / G$ is abelian, we can decompose according to the characters of $G' / G$, namely $$ S_2(G) = \bigoplus_{\eps : G'/G \to \C^{\times}} S_2(G', \eps). $$ 
In the cases where $G' = B_0(M)$ and $\Gamma_1(M) \cap \Gamma(K) \subseteq G$ for some relatively prime $K,M$, such that $KM=N$,  we may further identify 
$$ 
S_2(G',\eps) = 
\bigoplus_{\substack{\chi : \left( \Z / MK^2 \Z \right)^{\times} \to \C^{\times} \\ \chi \vert_{(K \Z + 1)/ MK^2 \Z} = \eps}} 
S_2(\Gamma_0(MK^2), \chi)^{G'=\epsilon}
$$
by conjugating with $\alpha_K = \left(\begin{smallmatrix} 1 & 0 \\ 0 & K \end{smallmatrix} \right)$.
We can then create the space of modular symbols corresponding to $S_2(\Gamma_0(MK^2), \chi)$, and cut out the subspace on which $G'$ acts via $\epsilon$ by a method similar to the algorithm described in \cite{B2021}*{Algorithm~4.11}. Putting together all these elements, we obtain an algorithm that, given a group $G$ such that $B_1(M) \subseteq G_M \subseteq B_0(M)$, returns the $q$-expansions of a basis for the space of cusp forms $S_2(G)$.
Denote by $\pi_M : \GL_2(\Z / N \Z) \to \GL_2(\Z / M \Z)$ the natural projection map. 

\begin{proposition}
The running time complexity of the algorithm described above for a group $G = \pi_M^{-1}(G_M) \cap \pi_K^{-1}(G_K)$ with $B_1(M) \subseteq G_M \subseteq B_0(M)$ of genus $g$ is given by
\begin{equation*}
    \tilde{O}\left([G_M : B_1(M)] (M^3K^6 + MK^4g^2) \right).
\end{equation*}
For a group of Shimura type $G= G(H,t)$, it is given by $\tilde{O}\left(\frac{\phi(N)}{|H|} Nt^2g^2 \right)$.
\end{proposition}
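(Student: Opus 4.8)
The plan is to walk through the algorithm of the preceding subsection stage by stage, attaching to each stage a cost expressed in terms of the level of the auxiliary $\Gamma_0$-space it manipulates, the dimensions of the cusp-form spaces involved, and the $q$-expansion precision dictated by the Sturm bound (Corollary~\ref{cor:Sturm bound}); the stated bounds are then assembled using the index and genus estimates of Section~2. I will treat the general case and the Shimura-type case separately, since the decompositions driving them are of a slightly different shape.

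First I would fix the indexing. In the general case the algorithm runs over pairs $(\eps,\chi)$: an outer loop over the characters $\eps$ of $G'/G$ and, for each $\eps$, an inner loop over the Dirichlet characters $\chi$ modulo $MK^2$ restricting to $\eps$ on the subgroup $(K\Z+1)/MK^2\Z$. Since $\gcd(M,K)=1$ and only those $\chi$ that are trivial on the image of $G$ in $(\Z/MK^2\Z)^{\times}$ yield a nonzero summand, a computation using the isomorphism $(\Z/MK^2\Z)^{\times}\cong(\Z/M\Z)^{\times}\times(\Z/K^2\Z)^{\times}$ shows that the constraint imposed by $G_M$ on the $M$-part of the nebentypus cuts the number of pairs that are actually processed down to $\tilde{O}([G_M:B_1(M)])$; this counting step is where one must be careful not to flip the ratio or lose a power of $K$. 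For the Shimura case the analogous, cleaner count is the number of $\chi$ modulo $Nt$ with $\chi(H)=1$, which by the decomposition recalled above equals $[(\Z/N\Z)^{\times}:H]=\phi(N)/|H|$ and accounts for the prefactor there.

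Next, for a single pair $(\eps,\chi)$ I would bound three substeps. (i) Building the modular-symbols space attached to $S_2(\Gamma_0(MK^2),\chi)$ with its boundary and Hecke structure, and cutting out the subspace on which $G'$ acts through $\eps$: the space of Manin symbols has dimension $\tilde{O}([\SL_2(\Z):\Gamma_0(MK^2)])=\tilde{O}(MK^2)$, and the dominant cost is the linear algebra over the cyclotomic field of $\chi$ needed to present it and to take the relevant eigenspace of the finitely many generators of $G'/G$, which is $\tilde{O}\big((MK^2)^3\big)=\tilde{O}(M^3K^6)$. (ii) Recovering the $q$-expansions of a basis of the resulting cusp-form space, of dimension $d_{\eps,\chi}$: by Corollary~\ref{cor:Sturm bound} it suffices to compute $\tilde{O}(MK^2)$ coefficients, but since the forms we ultimately want live on the $\alpha_K$-conjugate and are expanded in $q^{1/h}$ with cusp width $h=\tilde{O}(K)$, one unfolds by a further factor $\tilde{O}(K)$; computing these coefficients through the Hecke action and then solving a size-$d_{\eps,\chi}$ linear system to pass to a basis costs $\tilde{O}\big(MK^2\cdot K^2 + MK^2\, d_{\eps,\chi}^2\big)$. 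Summing the second term over all pairs and using $\sum_{\eps,\chi}d_{\eps,\chi}=\dim_{\C}S_2(G)=g$, hence $\sum d_{\eps,\chi}^2\le g^2$, produces the $MK^4g^2$ contribution, while the first term, once the prefactor is restored, is absorbed into $M^3K^6$. Adding (i) and (ii) and multiplying by the number of pairs gives $\tilde{O}\big([G_M:B_1(M)](M^3K^6+MK^4g^2)\big)$.

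Finally, the Shimura-type estimate follows from the same bookkeeping applied to $S_2(\alpha_t\Gamma(H,t)\alpha_t^{-1})=\bigoplus_{\chi(H)=1}S_2(\Gamma_0(Nt),\chi)$ (computing the $q$-expansions either directly, \emph{cf.}~\cite{A2022}, or through modular symbols at level $Nt$): here the nebentypus decomposition is already part of the construction, so there is no separate ``$G'=\eps$'' step and the cubic-in-level term disappears; only the $q$-expansion stage survives, and with $\phi(N)/|H|$ characters, auxiliary level $Nt$, an unfolding factor $\tilde{O}(t)$ from the width of the cusp at $\infty$ of $\Gamma(H,t)$, and $\sum_{\chi}d_{\chi}^2\le g^2$, one obtains $\tilde{O}\big(\tfrac{\phi(N)}{|H|}Nt^2g^2\big)$. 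I expect the main obstacle to be precisely the careful accounting in the counting and precision steps: showing that the number of processed characters is governed by $[G_M:B_1(M)]$ (rather than its reciprocal or a product with an extra power of $K$), and attributing correctly the unfolding factor $\tilde{O}(K)$ — respectively $\tilde{O}(t)$ — coming from the cusp width, so that the exponents of $K$ and of $t$ in the two summands come out exactly as stated.
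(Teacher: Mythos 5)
Your overall architecture---a per-character modular-symbols stage that is cubic in $MK^2$, a $q$-expansion stage, and a count of the characters processed---matches the paper's, but the decisive step is handled differently and your version has a genuine gap: the precision analysis. The paper bounds the required number of coefficients by applying Corollary~\ref{cor:Sturm bound} to the weight-$4$/$6$ relations on the pullback $\Gamma$ of $G$ itself, combined with Zograf's bound $[\SL_2(\Z):\Gamma]=O(g)$ and the cusp width $h\le K$ (resp.\ $h=t$), giving $L=O(Kg)$ (resp.\ $L=O(tg)$); the $g^2$ in the stated bound then arises from the term $dL^2$ in the cost model $\tilde{O}(d^3+dL^2)$ of \cite{BBCCCDLLRSV2020}, with $d=\dim S_2(\Gamma_0(MK^2),\chi)=O(MK^2)$. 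You instead take the Sturm bound at the auxiliary level, i.e.\ precision $\tilde{O}(MK^2)$ times an unfolding factor $K$: fed into that same cost model this produces a term $\tilde{O}\bigl(MK^2\cdot(MK^3)^2\bigr)=\tilde{O}(M^3K^8)$, which exceeds the claimed $M^3K^6+MK^4g^2$. Your substitute accounting (the per-pair cost ``$MK^2\cdot K^2+MK^2\,d_{\eps,\chi}^2$'', then $\sum_{\eps,\chi} d_{\eps,\chi}^2\le g^2$) is not backed by any stated cost model, and in any case it yields $MK^2g^2$, not the $MK^4g^2$ you assert it produces. So, as written, the argument does not establish the stated complexity; the missing idea is precisely that the precision is governed by the genus---via the Sturm bound for $\Gamma$ in weight at most $6$ and Zograf's index bound---rather than by the auxiliary level $MK^2$.

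The same issue appears in the Shimura-type case: the factor $t^2g^2$ comes from $L^2$ with $L=O(tg)$ (the cusp width at $\infty$ being exactly $t$), not from a single unfolding factor $t$ combined with $\sum_\chi d_\chi^2\le g^2$; moreover your bookkeeping there double counts, since you both sum $d_\chi^2$ over all characters (already giving $g^2$) and then multiply by the number $\phi(N)/|H|$ of characters again. Your careful count of the processed characters as $\tilde{O}([G_M:B_1(M)])$ is consistent with the proposition (the paper is terse here, saying only that one sums over all Dirichlet characters), but that was not where the difficulty lay.
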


\begin{proof}
We note that the complexity of the algorithm is dominated by the linear algebra operations performed in these spaces of modular forms. Specifically, since the algorithm requires computing the Hecke decomposition, to obtain the modular symbols corresponding to the eigenform, our complexity is dominated by~$\tilde{O}(d^3 + dL^2)$, where $d = \dim S_2(\Gamma_0(MK^2), \chi)$ is the dimension of the space and $L$ is the precision required for the $q$-expansions, see \cite{BBCCCDLLRSV2020}*{Table 5.2.3}.
By Corollary~\ref{cor:Sturm bound}, as the cusp width $h$ is bounded by $K$, we see that the required precision to ascertain our linear relations indeed hold is bounded by $L \le K B(\Gamma, \wt)_c$, where $\wt$ is the maximal weight in which we look for an equation and $\Gamma$ is the pullback of $G$ to $\SL_2(\Z)$.
By Theorem~\ref{thm:NEP}, $\wt \in \{4, 6\}$ in all considered cases, with $\wt = 6$ occurring only if $g \le 6$. 
Finally, by \cite{Z1991}*{Theorem 2.3}, the index 
$[\SL_2(\Z) : \Gamma] = O(g)$, hence $L = O(Kg)$. 
As $d = O(MK^2)$ (see \cite{M2005} for a more precise and detailed asymptotic analysis), it follows that the running time complexity of the algorithm on the space corresponding to each Dirichlet character $\chi$ is $\tilde{O}(M^3K^6 + MK^4g^2)$, and summing over all Dirichlet characters we obtain the result. For a group of Shimura type, we can simply compute for each of the direct summands the Hecke operators up to the required precision, which now satisfies $L = O(tg)$, as the cusp width at $\infty$ is precisely $h = t$.
\end{proof}


As a result, when looking for smooth plane models of general congruence subgroups, we will have to restrict ourselves to reasonable ranges of the parameter $MK^2$. We therefore treat in this paper only groups that are of Shimura type or such that $MK^2 \le 500$. We further note that different representatives in the conjugacy class of $\Gamma$, and different groups $G$ which pull back to $\Gamma$ give rise to different values of $M,K$. We find for each conjugacy class of the congruence subgroup $\Gamma$, a corresponding group $G$ with the maximal value of $M$ (and so the minimal for $MK^2$). Moreover, by their definition, for groups of Shimura type we may choose $M = N/t $ and $K = t$, making them the easiest to compute using this method as well.

\section{From a canonical model to a smooth plane model}\label{sec:4}
In this section we propose an algorithm to check whether a smooth irreducible projective curve $C$ of genus $g$ and defined over a perfect field $k$ does not admits a smooth plane model over $\bar{k}$. In the case where we cannot rule out this possibility, we propose a strategy to compute such smooth plane model. We do not focus on the minimal fields of definitions for these models, but we point the interested reader to \cite{BBLG2019}.

\subsection{The low genus cases}
For $g=0,1$ there is always a smooth plane model. For genus $2$, or more generally, for hyperelliptic curves, there is never one. For genus $3$ non-hyperelliptic there is always a smooth plane model and it is given by the canonical model. The next genus to check is $6$, for which we have another necessary condition in order to have a smooth plane model of degree $5$: the canonical ideal $I_C$ defining $\phi_K(C)$ is not generated only by degree $2$ elements, see Theorem \ref{thm:NEP}. Still in this situation we need to distinguish between trigonal curves and smooth plane quintics, see Subsection \ref{subsec:g6} for a detailed example. Let us recall the following classical result, coming from the description of the regular differential forms of a smooth plane curve: The canonical model of a degree $d$ smooth plane curve is given by the composition of $C\hookrightarrow\mathbb{P}^2$ with the $(d-3)$-Veronese embedding $\mathbb{P}^2\hookrightarrow\mathbb{P}^{g-1}$.

\begin{lemma}\label{lem:g6}
Let $C$ be a smooth plane quintic curve. Then the degree $2$ elements of the canonical ideal $I_C$ defining $\phi_K(C)$ define a $\mathbb{P}^2$. A bijective parametrization of it,  evaluated at a degree $3$ non-trivial generator of $I_C$, gives the smooth plane quintic model. 
\end{lemma}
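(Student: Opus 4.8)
The plan is to exploit the classical description of the canonical map of a smooth plane curve recalled immediately before the lemma: for a smooth plane quintic $C$ we have $d=5$ and $g=6$, the plane embedding realises $C$ as $\{F=0\}\subseteq\mathbb{P}^2$ for an irreducible quintic form $F$, and the canonical map factors as $\phi_K=v_2\circ\iota$, where $\iota\colon C\hookrightarrow\mathbb{P}^2$ is this embedding and $v_2\colon\mathbb{P}^2\hookrightarrow\mathbb{P}^5$ is the $2$-Veronese embedding, whose image is the Veronese surface $V$. In particular $\phi_K(C)\subseteq V$.

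First I would identify the zero locus of the quadrics in the canonical ideal. Since $\phi_K(C)\subseteq V$, every quadric vanishing on $V$ lies in $I_C$, so $(I_V)_2\subseteq (I_C)_2$, where $I_V$ is the homogeneous ideal of $V$. Conversely, given a quadric $Q\in (I_C)_2$, its pullback $v_2^*Q$ is a quartic form on $\mathbb{P}^2$ vanishing on $v_2^{-1}(\phi_K(C))=\{F=0\}$; a quartic divisible by a quintic is zero, so $v_2^*Q=0$, i.e. $Q$ vanishes on $V$ and $Q\in (I_V)_2$. Hence $(I_C)_2=(I_V)_2$. As the Veronese surface is classically cut out, scheme-theoretically, by the six quadrics generating $I_V$ (the $2\times 2$ minors of a generic symmetric $3\times 3$ matrix of linear forms), the common zero locus of the degree-$2$ part of $I_C$ is exactly $V$, which via $v_2$ is a $\mathbb{P}^2$. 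This proves the first assertion. As a consistency check, Riemann--Roch and projective normality of the canonical curve give $\dim(I_C)_2=\binom{7}{2}-(3g-3)=6$.

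Next I would fix a bijective parametrization $\psi\colon\mathbb{P}^2\to V\subseteq\mathbb{P}^5$ — for instance, after a linear change of coordinates on $\mathbb{P}^5$ bringing $V$ to the standard Veronese, the map $[s:t:u]\mapsto[s^2:st:su:t^2:tu:u^2]$ — which is defined over any field over which the plane model exists, and observe that $\psi^{-1}(\phi_K(C))$ is again a smooth plane quintic, say $\{F=0\}$ with $F$ irreducible of degree $5$. By Theorem~\ref{thm:NEP} in the genus-$6$ plane-quintic case, $I_C$ is not generated by its quadrics, so there is a degree-$3$ element $G\in (I_C)_3$ not in the ideal generated by $(I_C)_2=(I_V)_2$; since $I_V$ is generated in degree $2$, this forces $G\notin (I_V)_3$, i.e. $G$ does not vanish on $V$. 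Using $v_2^*\mathcal{O}_{\mathbb{P}^5}(1)=\mathcal{O}_{\mathbb{P}^2}(2)$, the pullback $\psi^*G$ is a nonzero sextic on $\mathbb{P}^2$ vanishing on $\{F=0\}$, hence divisible by the irreducible quintic $F$: we get $\psi^*G=F\cdot\ell$ for a linear form $\ell$ (geometrically, $\{\ell=0\}$ is the line whose Veronese image is the residual conic of $V\cap\{G=0\}$, accounting for the discrepancy $12-10$ between $\deg(V\cap\{G=0\})$ and $\deg\phi_K(C)$). Removing the linear factor recovers $F$, which is independent of the choice of $G$ since it is the unique irreducible quintic factor of $\psi^*G$; this is the desired smooth plane quintic model.

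All the computations here are short, so there is no genuine obstacle; the points that require care are: invoking correctly the classical facts that the Veronese surface is scheme-theoretically cut out by its quadrics and that $I_V$ is generated in degree $2$; and checking that the degree-$3$ generator supplied by Theorem~\ref{thm:NEP} does not vanish identically on $V$, which is exactly what guarantees $\psi^*G\neq 0$ and hence that the sextic $\psi^*G=F\cdot\ell$ genuinely encodes the plane model. The rest is degree bookkeeping via $v_2^*\mathcal{O}(1)=\mathcal{O}(2)$ together with the irreducibility of $F$.
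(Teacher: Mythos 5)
Your proof is correct and follows essentially the same route as the paper's: factor $\phi_K$ through the $2$-Veronese embedding, identify the quadrics of $I_C$ with those cutting out the Veronese surface (hence a $\mathbb{P}^2$), and recover the quintic $F$ from a cubic generator via the parametrization, the pullback being $F$ times a linear form. You simply make explicit two points the paper leaves implicit, namely the equality $(I_C)_2=(I_V)_2$ via the divisibility argument and the fact that the extra degree-$3$ generator does not vanish on the Veronese surface, which matches the paper's description of the cubic generators as $xF$, $yF$, $zF$.
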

\begin{proof}
In the quintic case $C:\,F(x,y,z)=0\subseteq\mathbb{P}^2$ with $\operatorname{deg}(F)=5$ and the Canonical model is given by the composition with the $2$-Veronese embedding $\mathbb{P}^2\hookrightarrow\mathbb{P}^5$. The canonical image $\phi_K(C)$ is generated by the equations defining $\phi_K(\mathbb{P}^2)$ that can all be taken of degree $2$ and the $3$ degree $3$ equations, corresponding to $xF(x,y,z)=0$, $yF(x,y,z)=0$ and $zF(x,y,z)=0$.
\end{proof}

We deal next with the higher genus situations.

\subsection{The minimal free resolution}

A smooth curve $C$  of genus $g$ admits a smooth plane model if and only if it has a (unique up to linear equivalence) very ample complete $g^2_d$-linear series, i.e. a very ample divisor $D$ such that $\text{deg}(D)=d$ and $\ell(D)=3$. Given a basis $\{x,y,z\}$ of $\mathcal{L}(D)$, the plane model is given by the image of $C\rightarrow\mathbb{P}^2:\,P\mapsto(x(P):y(P):z(P))$. 

\begin{theorem}[\cite{Green}*{Appendix}]\label{thm:Koszul}
If a smooth curve $C$ of genus $g=\frac{(d-1)(d-2)}{2}$ with $d\geq5$ and canonical divisor $K$, has a $g^2_d$-linear series then the Koszul cohomology group $\mathcal{K}_{\frac{(d-3)(d-2)}{2},1}(C,K)\neq 0$.
\end{theorem}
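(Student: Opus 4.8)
The plan is to derive the non-vanishing of the Koszul cohomology group $\mathcal{K}_{\frac{(d-3)(d-2)}{2},1}(C,K)$ from the geometry of the plane model together with the explicit description of the canonical embedding of a smooth plane curve. Recall from the classical result quoted just before the statement that if $C\subseteq\mathbb{P}^2$ is a smooth plane curve of degree $d$, then its canonical embedding factors as $C\hookrightarrow\mathbb{P}^2\stackrel{v_{d-3}}{\hookrightarrow}\mathbb{P}^{g-1}$, where $v_{d-3}$ is the $(d-3)$-Veronese embedding, because the regular differentials on $C$ are exactly $\omega = A(x,y,z)\,\mathrm{d}x/\mathrm{d}z$ with $A$ homogeneous of degree $d-3$, so $H^0(C,K)\cong H^0(\mathbb{P}^2,\mathcal{O}(d-3))$ and the canonical ring of $C$ is a quotient of the homogeneous coordinate ring of the Veronese surface.

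First I would set $r = \frac{(d-3)(d-2)}{2}$ and $n = \frac{(d-1)(d-2)}{2}-1 = g-1$, and unwind what $\mathcal{K}_{r,1}(C,K)\neq 0$ means concretely: it is the cokernel (or a subquotient) in the Koszul complex $\bigwedge^{r+1}V\otimes H^0(C,K) \to \bigwedge^{r}V\otimes H^0(C,2K) \to \bigwedge^{r-1}V\otimes H^0(C,3K)$, where $V = H^0(C,K)$. The strategy is to compare this with the analogous Koszul cohomology of the Veronese surface $S = v_{d-3}(\mathbb{P}^2)\subseteq\mathbb{P}^{g-1}$, which is computed by Green's theory: linear syzygies of the $(d-3)$-uple Veronese of $\mathbb{P}^2$ are completely understood, and the relevant Koszul group $\mathcal{K}_{r,1}(\mathbb{P}^2,\mathcal{O}(d-3))$ is controlled by the Eagon--Northcott type resolution of the Veronese ideal. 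Then, using the inclusion $C\subseteq S$ and that $\mathcal{O}_C(K) = \mathcal{O}_S(1)|_C$, one gets a long exact sequence relating the Koszul cohomology of $C$, that of $S$, and that of the twisted ideal sheaf $\mathcal{I}_{C/S}(t)$; the point is that $C$ is a divisor in the smooth surface $S$ of class $\mathcal{O}_S(1)$ (since $\deg_{v_{d-3}} C = d(d-3)$ and a hyperplane section of $S$ is a rational normal curve of the same degree, i.e. $C$ is cut out on $S$ by a single hyperplane — wait, more precisely $C$ is the image of the degree-$d$ plane curve, so on $\mathbb{P}^2$ it is $\mathcal{O}(d)$, which is $\mathcal{O}_S\big(\tfrac{d}{d-3}\big)$ only when $(d-3)\mid d$; so instead one works directly on $\mathbb{P}^2$), hence one should phrase everything on $\mathbb{P}^2$: $H^0(C, mK) = H^0(\mathbb{P}^2, \mathcal{O}(m(d-3)))$ for $m=1,2$ but for $m\geq 3$ there is the defect coming from $H^1$ of $\mathcal{I}_C$, and this defect, via $0\to\mathcal{O}(m(d-3)-d)\to\mathcal{O}(m(d-3))\to\mathcal{O}_C(mK)\to 0$, is exactly $H^0(\mathbb{P}^2,\mathcal{O}(m(d-3)-d))$. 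Tracking this defect through the Koszul differential is what produces the nonzero class.

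The cleanest route, and the one I expect the author follows, is: (a) observe that the homogeneous ideal $I_C$ of the canonically embedded $C$ contains the ideal $I_S$ of the Veronese surface, with $I_C/I_S$ generated in degrees coming from $x\cdot F, y\cdot F, z\cdot F$ where $F$ is the defining degree-$d$ polynomial — more precisely $I_C = I_S + (\text{image of } F\cdot H^0(\mathcal{O}(\bullet)))$; (b) the minimal free resolution of $\mathbb{P}^2$ embedded by $\mathcal{O}(d-3)$ is the (linear) Eagon--Northcott-type complex, which is $(r-1)$-linear and in particular $\mathcal{K}_{p,1}(\mathbb{P}^2,\mathcal{O}(d-3)) = 0$ for $p\geq r$; (c) therefore any linear syzygy contribution in homological degree $r$ must come from $I_C/I_S$, i.e.\ from the extra cubic-type generator — and a dimension count (Euler characteristic of the Koszul complex, or equivalently the fact that $C$ is arithmetically Cohen--Macaulay of the right regularity) shows this contribution is genuinely nonzero. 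The main obstacle, I expect, is the bookkeeping in step (c): one must show that the extra generators coming from $F$ actually survive to contribute to $\mathcal{K}_{r,1}$ rather than being absorbed by the syzygies of $I_S$, which amounts to checking that the relevant piece of the mapping-cone resolution (cone of $I_S \hookrightarrow I_C$, suitably) has a nonzero Betti number $\beta_{r,r+1}$. This can be done either by the explicit Eagon--Northcott resolution of the Veronese, or more slickly by duality: $\mathcal{K}_{r,1}(C,K)$ is dual (by Green's duality theorem, since $\omega_C = \mathcal{O}_C(K)$) to a Koszul group $\mathcal{K}_{g-3-r,1}(C,K)$ in low homological degree, which is nonzero precisely because $C$ does \emph{not} satisfy property $N_{d-4}$ — equivalently because $C$ is not projectively normal enough in the sense measured by the Clifford index, and the plane-curve Clifford index $d-4$ (coming from the $g^2_d$) is strictly less than the generic value, forcing the Koszul group to jump. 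I would present the argument via this duality plus Green's Clifford-index description of the first syzygy gap, as it avoids the explicit Veronese resolution entirely.

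\begin{remark}
In the special low cases $d = 5$ ($g = 6$, $r = 3$) and $d = 6$ ($g = 10$, $r = 6$), the statement can be checked by hand against Lemma~\ref{lem:g6} and Theorem~\ref{thm:NEP}: the nonvanishing Koszul class is exactly the one detecting the degree-$3$ generator needed for the smooth plane quintic, respectively the analogous higher syzygy for the sextic. This provides a useful sanity check on the general argument.
\end{remark}
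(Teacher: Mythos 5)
The paper does not prove this statement at all: it is imported from the Appendix of \cite{Green}, i.e.\ it is a special case of the Green--Lazarsfeld nonvanishing theorem. That theorem says that if $K_C \cong L_1\otimes L_2$ with $r_i=h^0(L_i)-1\geq 1$, then $\mathcal{K}_{r_1+r_2-1,1}(C,K)\neq 0$, and it is proved by an \emph{explicit construction} of a nonzero Koszul class out of products of sections of $L_1$ and $L_2$. The statement follows by taking $L_1$ the line bundle of the $g^2_d$ and $L_2=K-L_1$: Riemann--Roch with $g=\frac{(d-1)(d-2)}{2}$ gives $h^0(K-L_1)\geq \frac{(d-2)(d-3)}{2}$, so $r_1+r_2-1=\frac{(d-2)(d-3)}{2}$ (with $h^0(L_1)=3$ in this range). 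No smooth plane model, no Veronese resolution and no duality are needed; in particular one never has to know that the $g^2_d$ is very ample, a point you assume implicitly when you start from ``$C\subseteq\mathbb{P}^2$ a smooth plane curve'' (this is true for curves of maximal genus, but it is an extra classical argument your write-up does not supply).

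Measured against this, your proposal has two genuine gaps. First, the route you declare cleanest --- Green duality plus ``Green's Clifford-index description of the first syzygy gap'' --- is circular: the assertion that a Clifford index below the generic value ``forces the Koszul group to jump'' \emph{is} the Green--Lazarsfeld nonvanishing statement you are being asked to prove (its converse is Voisin's theorem), so it cannot be quoted; moreover the duality bookkeeping is wrong: $\mathcal{K}_{p,1}(C,K)$ is dual to $\mathcal{K}_{g-2-p,2}(C,K)$, so the dual of the group in question is $\mathcal{K}_{d-4,2}(C,K)$ (failure of property $N_{d-4}$), not the group $\mathcal{K}_{g-3-r,1}$ you wrote, and establishing that failure is again exactly the task at hand. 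Second, step (b) of your Veronese route is false: the linear strand of $v_{d-3}(\mathbb{P}^2)$ does \emph{not} vanish at step $r=\frac{(d-2)(d-3)}{2}$. For instance, for $d=6$ the cubic Veronese surface in $\mathbb{P}^9$ lies on $27$ quadrics and its coordinate ring is Gorenstein, so by self-duality of its length-$7$ resolution $\beta_{6,7}=\beta_{1,2}=27$, i.e.\ $\mathcal{K}_{6,1}(\mathbb{P}^2,\mathcal{O}(3))\neq 0$ with $6=r$ (in general $\mathcal{K}_{\binom{d-2}{2},1}(\mathbb{P}^2,\mathcal{O}(d-3))\neq 0$, as one sees from the decomposition $\mathcal{O}(d-3)=\mathcal{O}(1)\otimes\mathcal{O}(d-4)$). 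Consequently step (c), ``any linear syzygy contribution in homological degree $r$ must come from $I_C/I_S$'', starts from a false premise; in fact the sought-for class is most naturally induced from the Veronese surface itself, and the real work --- nowhere carried out in your sketch --- is to show that it restricts to a \emph{nonzero} class on the canonical curve (equivalently, that the claimed Betti number survives minimalization of the mapping cone). That surviving-class argument is precisely what the explicit Green--Lazarsfeld construction accomplishes.
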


This theorem proves a special case of one of the directions of Conjecture 5.1 in \cite{Green}. In terms of graded Betti numbers \cite{schreyer}*{p. 84} we have: $$\operatorname{dim}(\mathcal{K}_{\frac{(d-3)(d-2)}{2},1}(C,K_C))=\beta_{d-4,d-2}.$$

Let $C/\mathbb{C}$ be a smooth curve of genus $g$ and $\phi_K(C)$ its image by the canonical map given by the ideal $I_C$ in $S=\mathbb{C}[z_0,z_1,...,z_{g-1}]$. Let $S_C=S/I_C$ be the homogeneous coordinate ring of $\phi_K(C)$. We consider the minimal free resolution:
$$
0\leftarrow S_C\leftarrow S\leftarrow F_1\leftarrow F_2 \leftarrow ... \leftarrow F_{g-2}\leftarrow 0.
$$

Noether proved that $F_i=S(-i-1)^{\beta_{i,i+1}}\oplus S(-i-2)^{\beta_{i,i+2}}$ for $i=1,...,g-3$, i.e. that $F_i$ is a module generated by elements of degree $i+1$ and $i+2$. These Betti numbers can be computed with \verb+Magma+ \cite{Magma}. In order to speed up these calculations, we compute the Betti numbers for the reduction of the curve modulo a prime of good reduction: in this case the Betti numbers are the same for both curves, see \cite{milneLEC}*{Thm. 20.5}.  

When $\beta_{d-4,d-2}\neq0$, we still need to check whether a smooth plane model exists. As in the proof of Lemma \ref{lem:g6}, when $g\geq6$ the ideal $I_C$ is generated by the degree 2 equations defining $\mathbb{P}^2\hookrightarrow\mathbb{P}^{g-1}$ by the $(d-3)$-Veronese embedding plus the (this time) degree 2 equations $x^{a}y^bz^cF(x,y,z)=0$ with $a+b+c=d-6$. In order to recover the putative smooth model we aim to determine the degree 2 equations defining the $\mathbb{P}^2$. Then we compute a bijective parametrization and plug it into any other equation of $I_C$, not defining the $\mathbb{P}^2$, and, therefore, we should obtain the smooth plane model we are looking for. If not such a model is found, it means that it does not exists. This strategy to recover the $\mathbb{P}^2$ is the one in the proof of Theorem 4.1 in \cite{schreyer} that gives a proof of the reverse implication of Conjecture 5.1 in \cite{Green} for $d=6$. The idea is to recover the exceptional surface, so the $\mathbb{P}^2$, by finding relations with a certain shape and the standard basis techniques presented in the Appendix of \cite{schreyer}. We present an implementation of this algorithm in \cite{ASSAFgit}.

\subsection{The algorithm}
Following the discussion in the previous subsections, we present an algorithm,  Algorithm~\ref{alg:cantoplane}, which allows to determine whether a curve admits a smooth plane model.

\begin{algorithm}[hbt!]
\caption{Existence of a smooth plane model}\label{alg:cantoplane}
\KwData{$C/k$ a genus $g$ curve given with its canonical model}
\KwResult{determining whether $C$ admits a smooth plane model and, when possible, returning such model}
$M \gets 0 $\;
\eIf{$g$ is $0$ or $1$}
{$T \gets true $\;}
{\eIf{$g$ is $3$}
{\eIf{$g(\phi_K(C))=3$}{$T \gets true$\;$M \gets \phi_K(C)$\;}{$T \gets false$\;}}{
\eIf{$g$ is $6$}{\eIf{$I_C$ generated by quadrics}{$T \gets false$\;}{compute $M$ with Lemma \ref{lem:g6}\; 
\eIf{$M$ is a smooth plane quintic}{$T \gets true $\;}{$T \gets false $\;}}}{
$T \gets false$\;
\If{$\exists d\in \N$ with $g=(d-1)(d-2)/2$ and $g(\phi_K(C))\neq 0$ }{Apply Theorem~\ref{thm:Koszul}\;
\If{$\beta_{d-4,d-2}\neq0$ }{compute $M$ with Schreyer's strategy \cite{schreyer}\; \If{$M$ is smooth}{$T \gets true$\;}}
}
}
}
}
\Return{$T,M$}
\end{algorithm}

\subsection{Other strategies}
Sometimes, in order to prove that a certain curve does not admit a smooth plane model, we can try some less computationally expensive techniques. For instance, when the curves under considerations have some involutions:

\begin{theorem}\label{thm:Harui}[Remark 2.1 (i) \& Theorem 2.2 with $n=2$ in \cite{Harui}] Let $C$ be a smooth plane curve of degree $d$ and $\sigma$ an involution of $C$. Then the involution $\sigma$ has $f=d+\frac{1-(-1)^d}{2}$ fixed points and the quotient $C/\langle\sigma\rangle$ has gonality $\lfloor\frac{d}{2}\rfloor$.
\end{theorem}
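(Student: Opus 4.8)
The plan is to reduce the pair $(C,\sigma)$ to an explicit linear normal form inside $\mathbb{P}^2$ and then read off both assertions separately: the fixed-point count from a local analysis at the fixed points, and the gonality of the quotient from Riemann--Hurwitz combined with the known gonality of smooth plane curves. Passing to $\overline{k}$ (which we may assume has characteristic $\ne 2$), recall that for $d\ge 4$ the canonical embedding of $C$ is the composite of the plane embedding with the $(d-3)$-Veronese map; hence every automorphism of $C$ is induced by an element of $\PGL_3(\overline{k})$ stabilising $C$, and in particular $\sigma$ is the restriction of a projective involution. Up to conjugation this involution is $[x:y:z]\mapsto[x:y:-z]$, with fixed locus the line $L=\{z=0\}$ together with the isolated point $v=[0:0:1]$. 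Writing $C=\{F=0\}$, $\sigma$-invariance forces $F(x,y,-z)=\pm F(x,y,z)$; the minus sign would give $z\mid F$, impossible for the irreducible form $F$, so $F=\sum_{j\ge 0}z^{2j}g_{d-2j}(x,y)$ is even in $z$. (The cases $d\le 3$ are classical and checked directly, so I assume $d\ge 4$.)

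For the fixed points, observe that $\mathrm{Fix}(\sigma)\cap C=C\cap(L\cup\{v\})$. First I would check that $L$ is nowhere tangent to $C$: at a fixed point of an order-$2$ automorphism of a smooth curve in characteristic $\ne 2$, linearising the action on $\widehat{\mathcal{O}}_{C,P}$ shows $\sigma$ acts on the tangent line by $-1$ (a $+1$-action would fix a formal neighbourhood, hence all of $C$), whereas $\sigma$ acts by $+1$ along $L$; so there is no tangency and $C\cap L$ consists of the $d$ distinct zeros of the binary form $g_d$, all fixed. Next, $v\in C$ iff the coefficient $g_0$ of $z^d$ vanishes: this is automatic when $d$ is odd, and when $d$ is even one checks that $g_0=0$ would make all three partials of $F$ vanish at $v$, contradicting smoothness. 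Hence $v$ is an additional fixed point exactly when $d$ is odd, giving $f=d+\tfrac{1-(-1)^d}{2}$.

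For the gonality, taking $\sigma$-invariants presents $C':=C/\langle\sigma\rangle$ as the image of $C$ under $[x:y:z]\mapsto[x:y:z^2]$, cut out in the weighted plane $\mathbb{P}(1,1,2)$ by the degree-$d$ weighted form $G(u,v,w)=\sum_j w^j g_{d-2j}(u,v)$. The projection $[u:v:w]\mapsto[u:v]$ restricts to a morphism $C'\to\mathbb{P}^1$ whose generic fibre is $\{G(u_0,v_0,w)=0\}$, a polynomial of degree $\lfloor d/2\rfloor$ in $w$ whose leading coefficient ($g_0$ if $d$ is even, $g_1$ if $d$ is odd) is nonzero by the same smoothness analysis, so $\operatorname{gon}(C')\le\lfloor d/2\rfloor$. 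Conversely, composing any degree-$k$ morphism $C'\to\mathbb{P}^1$ with the degree-$2$ cover $C\to C'$ yields a degree-$2k$ morphism $C\to\mathbb{P}^1$, so $2k\ge\operatorname{gon}(C)=d-1$ by \cite{COKA90}*{Theorem~A}, whence $k\ge\lceil(d-1)/2\rceil=\lfloor d/2\rfloor$; the two bounds give equality. (As a consistency check, Riemann--Hurwitz with this value of $f$ gives $g(C')=\lfloor\tfrac{d-2}{2}\rfloor\lceil\tfrac{d-2}{2}\rceil$.) The routine parts are straightforward; the genuinely delicate point is that both the fixed-point count and the degree of the projection must be \emph{exact}, not merely generic, and this is precisely where the linearisation of $\sigma$ at its fixed points and the smoothness of $C$ at the vertex $v$ do the work — without them one only obtains the inequalities $f\le d+\tfrac{1-(-1)^d}{2}$ and $\operatorname{gon}(C')\le\lfloor d/2\rfloor$. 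The one external input I would rely on is the classical fact underlying the normal form, namely that automorphisms of degree-$\ge 4$ smooth plane curves are projective.
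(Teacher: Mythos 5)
Your argument is correct in the range where the statement actually lives, namely $d\ge 4$, and it differs from the paper in that the paper offers no proof at all: Theorem~\ref{thm:Harui} is quoted verbatim from Harui (Remark~2.1(i) and Theorem~2.2 with $n=2$ in \cite{Harui}), whose standing hypothesis is $d\ge 4$. What you have done is reconstruct, essentially along the same lines as the source, the standard argument: linearity of automorphisms of smooth plane curves of degree $\ge 4$, diagonalisation of the involution to $[x:y:z]\mapsto[x:y:-z]$, the fixed-point count via transversality of $C$ with the fixed line (your linearisation argument that the tangent action at a fixed point is $-1$ is the right way to make the count exact) together with the smoothness analysis at the vertex, and the gonality computed by projecting from the isolated fixed point (upper bound $\lfloor d/2\rfloor$) combined with $\operatorname{gon}(C)=d-1$ from \cite{COKA90} (lower bound). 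This buys a self-contained verification of the imported result, which is genuinely useful since the paper relies on it for the genus $15$ and $21$ eliminations. One caveat: your parenthetical that the cases $d\le 3$ are ``classical and checked directly'' is not right as stated --- for $d=3$ an involution of a plane cubic need not be projective (translation by a $2$-torsion point is fixed-point free, and its quotient has gonality $2$, not $\lfloor 3/2\rfloor$), and for $d\le 2$ the gonality formula also fails; so the hypothesis $d\ge 4$ (equivalently, restricting to projective involutions) is genuinely needed. Since Harui's theorem and the paper's applications ($d=7,8$) all assume $d\ge 4$, this does not affect the substance of your proof, but the sentence should be replaced by an explicit restriction to $d\ge 4$.
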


Other ways of using the knowledge of some quotients to prove the non-existence of smooth plane models are the following results:

\begin{lemma}\label{lem:hyper}
Let $C$ be a smooth curve admitting  a degree $n$ morphism to a hyperelliptic curve. Then $C$ does not admit a smooth plane model of degree greater than $2n+1$. 
\end{lemma}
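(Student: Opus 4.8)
The plan is to bound the gonality of $C$ from above using the given morphism, and then exploit the fact that a smooth plane model of degree $d$ forces the gonality to be exactly $d-1$.

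First I would take the hypothesised degree-$n$ morphism $\psi\colon C \to H$ with $H$ hyperelliptic, and compose it with the hyperelliptic degree-$2$ map $h\colon H \to \mathbb{P}^1$. By multiplicativity of degrees, the composition $h\circ\psi\colon C \to \mathbb{P}^1$ has degree $2n$, so the gonality of $C$ (over $\bar{k}$) is at most $2n$.

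Next I would argue by contradiction. Suppose $C$ admits a smooth plane model of degree $d$. By \cite{COKA90}*{Theorem A} — the same result invoked in the proof of Theorem~\ref{finite} — the gonality of a smooth plane curve of degree $d$ is exactly $d-1$. Combining this with the bound from the previous step gives $d-1 \le 2n$, that is $d \le 2n+1$, which contradicts $d > 2n+1$. Hence no such model exists.

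I do not anticipate any genuine obstacle here; the argument is a direct comparison of two gonality estimates. The only point requiring (routine) care is bookkeeping about the field: "gonality" throughout means gonality over the algebraic closure, which is consistent both with the statement of the lemma (a smooth plane model over $\bar{k}$) and with the Coppens–Kato computation of the gonality of a smooth plane curve.
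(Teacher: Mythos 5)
Your proposal is correct and follows essentially the same argument as the paper: the paper's proof is exactly the one-line observation that the gonality of a smooth plane curve of degree $d$ is $d-1$ while the composed map to $\mathbb{P}^1$ bounds the gonality of $C$ by $2n$. You have simply written out the same comparison in more detail.
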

\begin{proof}
The gonality of a smooth plane curve of degree $d$ is $d-1$ and the gonality of a hyperelliptic curve is $2$.
\end{proof}

\begin{theorem}\label{thm:Greco}[Theorem 3.1 with $r=1$ in \cite{Greco}] A smooth plane curve of degree $d$ does not admit any rational map to $\mathbb{P}^1$ of degree $n$ such that $$(a - 1)d + 1 \le n \le ad - (a^2 + 1)$$ for some $a\in\mathbb{N}$.

\end{theorem}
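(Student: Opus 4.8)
The plan is to reduce the statement to a single inequality about base-point-free pencils, to establish that inequality by a B\'ezout argument, and to isolate the classification of such pencils on smooth plane curves as the one genuinely hard input. Since $C$ is smooth, a degree-$n$ rational map $C\to\mathbb{P}^1$ is a morphism whose fibres form a base-point-free $g^1_n$, say $|V|\subseteq|D|$ with $\deg D=n$, and conversely; so one may work with base-point-free pencils throughout. Next I would observe that, among all $a\in\mathbb{N}$, only $a:=\lceil n/d\rceil$ can make the interval $[(a-1)d+1,\,ad-a^2-1]$ contain $n$: for $b<a$ one has $bd<n$, so $n$ lies above the $b$-th interval, and for $b>a$ one has $(b-1)d\ge n$, so $n$ lies below it; for this $a$ we automatically have $n\ge(a-1)d+1$, so the theorem becomes equivalent to the single inequality $n\ge ad-a^2$ for every base-point-free $g^1_n$ on $C$. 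Here $H$ is the hyperplane class, $\deg H=d$, $K=(d-3)H$, and note that $a<d$ whenever the interval is non-empty, i.e.\ whenever there is anything to prove.

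The geometric core is the claim that such a pencil is cut out on $C$ by plane curves of degree $a$: that $h^0(\mathcal{O}_C(aH)\otimes\mathcal{O}_C(-D))\ne 0$, so that $D\sim aH-E$ for an effective $E$ of degree $ad-n$. Granting this, I would bound $\deg E\le a^2$ as follows. Fix $t$ with $\Div(t)=E$ and a basis $s_0,s_1$ of $V$; since $a<d$, the sections $s_0t$ and $s_1t$ are restrictions to $C$ of plane forms $G_0,G_1$ of degree $a$. These have no common component, for a common factor of degree $p\ge 1$ would, after cancellation, produce a nonzero section of $\mathcal{O}_C((a-p)H)\otimes\mathcal{O}_C(-D)$ of non-negative degree $(a-p)d-n\ge 0$, forcing $a-p\ge\lceil n/d\rceil=a$. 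Hence $\langle G_0,G_1\rangle$ is a pencil of degree-$a$ plane curves whose base scheme has length exactly $a^2$; since its general member cuts out $D_t+E$ on $C$ with $D_t$ ranging over the base-point-free pencil $|V|$, the divisor $E$ is contained scheme-theoretically in that base scheme, so $\deg E\le a^2$ and $n\ge ad-a^2$.

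The main obstacle is precisely the input I granted: $h^0(\mathcal{O}_C(aH-D))\ne 0$ for every base-point-free $g^1_n$ with $a=\lceil n/d\rceil$ --- equivalently, that the least $a'$ with $h^0(\mathcal{O}_C(a'H-D))\ne 0$ (automatically $\ge a$, since lower twists of $\mathcal{O}_C(-D)$ by $H$ have negative degree) satisfies $a'=a$. This is genuinely special to \emph{smooth} plane models and to the stated range of $n$, and Riemann--Roch alone cannot detect it, because $\mathcal{O}_C(a'H-D)$ and $\mathcal{O}_C(K-a'H+D)$ are Serre dual and the crude estimates cancel. Here I would follow \cite{Greco}, using a finer study of the covering $\pi\colon C\to\mathbb{P}^1$ --- e.g.\ the cohomology of the twists $\mathcal{O}_C(a'H-D)$ and $\mathcal{O}_C(a'H-2D)$, the scrollar invariants of $\pi$, or a direct analysis of how the fibres of $\pi$ sit inside $\mathbb{P}^2$ --- to force $a'=a$; the case $a=1$ is the gonality bound of \cite{COKA90} already invoked in the paper, and the finitely many small pairs $(d,a)$ can be checked by hand. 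With $a'=a$ established, the B\'ezout bound of the previous paragraph yields $n\ge ad-a^2$, which is exactly the inequality to which the theorem was reduced.
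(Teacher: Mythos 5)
The paper offers no proof of Theorem~\ref{thm:Greco}: it is quoted as an external result, namely Theorem~3.1 (with $r=1$) of \cite{Greco}, so there is no internal argument to compare yours against. Judged on its own, your proposal is a correct reduction rather than a proof. The bookkeeping is right: only $a=\lceil n/d\rceil$ is relevant; the statement is equivalent to the single inequality $n\ge ad-a^2$ for every base-point-free pencil of degree $n$; and, once a nonzero section of $\mathcal{O}_C(aH-D)$ is granted, your no-common-component and B\'ezout argument correctly gives $\deg E\le a^2$ and hence the inequality. (A minor point: surjectivity of $H^0(\mathbb{P}^2,\mathcal{O}(a))\to H^0(C,\mathcal{O}_C(aH))$ follows from $H^1(\mathbb{P}^2,\mathcal{O}(a-d))=0$ for every $a$, so the hypothesis $a<d$ is not what is needed there; it is only used in the common-factor argument, where your computation is fine.)

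The genuine gap is at the pivot: the nonvanishing $h^0(\mathcal{O}_C(aH-D))\ne 0$ --- i.e.\ that any pencil of degree $n$ in the forbidden range is cut out on $C$ by plane curves of degree $a=\lceil n/d\rceil$ --- is precisely the substance of the cited theorem, and the numerical consequence you extract from it is the easy part. You acknowledge this and propose to ``follow \cite{Greco}'' via unspecified tools (cohomology of further twists, scrollar invariants, geometry of the fibres), but none of these is carried out; the case $a=1$ is indeed the gonality theorem of \cite{COKA90}, yet the cases $a\ge 2$ do not reduce to it and, as you yourself observe, cannot be obtained from Riemann--Roch and Serre duality alone. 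So as it stands the proposal reduces Greco's theorem to its key lemma and then cites the same reference the paper relies on; that is acceptable as a citation, exactly as the paper treats it, but it is not an independent proof of the statement.
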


\section{Computations}
We compute equations for all modular curves of Shimura type of genus $1,3,6,10$ and $15$, i.e. possibly admitting a smooth plane model of degree $3,4,5,6$ or $7$. We run the algorithm in previous section on all of them, in order to check which ones do admit a smooth plane model. 

We also compute equations for modular curves of these genera which are not of Shimura type, when the congruence subgroup is $\Gamma = PG$, with $G \subseteq \GL_2(\Z / N \Z)$ satisfies $G = \pi_M^{-1}(G_M) \cap \pi_K^{-1}(G_K)$ with $B_1(M) \subseteq G_M \subseteq B_0(M)$ and $G_K \subseteq GL_2(\Z / K \Z)$, and such that $MK^2 \le 500$.
Note that $(MK^2)^3$ is the dominant factor in the running time complexity, and indeed for larger values of $MK^2$, the linear algebra becomes the bottleneck.

We discuss the results in the following subsections. We use the congruence subgroup labels introduced in \cite{CUPA03}. All computations were done using \verb+Magma+ \cite{Magma} and the full results are available online at \cite{ASSAFgit}.

\subsection{Genus 1}
In the case of modular curves of genus $1$ they always admit a smooth plane model of degree $3$. However, it is not given by the canonical model since they are not non-hyperelliptic. In this case we note that all groups of Shimura type give rise to elliptic curves, since the cusp at $\infty$ is rational. We may further compute models for some of the other congruence subgroups, using the methods in \cite{RZB2015} to obtain the $j$-map and a model for the curve. 
For example, we see that for the congruence subgroups of level $6$, the groups labeled 6A1, 6C1, 6D1 all yield elliptic curves, with equations $y^2 = x^3-27$, $y^2 = x^3 + 1$ and $y^2 = x^3 + 1$. 
We also find the $q$-expansion of the unique eigenform for all $98$ congruence subgroups of genus $1$ for which $MK^2 \le 500$. 

\subsection{Genus 3}
Among the $26$ groups of Shimura type of genus $3$, we find that there are $11$ modular curves which are hyperelliptic, these are listed below. We note that $7$ of these curves belong to the $X_0(N)$ family, and indeed we recover the models of Galbraith \cite{G1996} for all these curves except for $X_0(35)$ and $X_0(41)$ that we find different ones. These, of course, give isomorphic curves to the Galbraith ones.

\renewcommand{\arraystretch}{1.33}

\begin{table}[ht]
\begin{tabular}{|c|c|c|}\hline
    \bf{label}  &  \bf{name}    &    \bf{Hyperelliptic model }          \\ \hline
    12K3        &               &   \small $ y^2 = x^8 + 14x^4 + 1$          \\
    20J3        &               &   \small $ y^2 = x^8 + 8x^6 - 2x^4 + 8x^2 + 1$   \\
    21D3        &               &   \small $ y^2 = x^8 - 6x^6 + 4x^5 + 11x^4 - 24x^3 + 22x^2 - 8x + 1$ \\
	24V3 	    &               &   \small $ y^2 = x^8 + 14x^4 + 1$ \\
	30K3        &   $X_0(30)$   &   \small $ y^2 = x^8 + 6x^7 + 9x^6 + 6x^5 - 4x^4 - 6x^3 + 9x^2 - 6x + 1 $ \\  
	33C3        &   $X_0(33)$   &   \small $ y^2 = x^8 + 10x^6 - 8x^5 + 47x^4 - 40x^3 + 82x^2 - 44x + 33$ \\
	35A3        &   $X_0(35)$   &   \small $ y^2 = x^8 - 12x^7 + 50x^6 - 108x^5 + 131x^4 - 76x^3 - 10x^2 + 44x - 19 $ \\
	39A3        &   $X_0(39)$   &   \small $ y^2 = x^8 - 6x^7 + 3x^6 + 12x^5 - 23x^4 + 12x^3 + 3x^2 - 6x + 1$ \\
	40F3        &   $X_0(40)$   &   \small $ y^2 = x^8 + 8x^6 - 2x^4 + 8x^2 + 1$ \\
	41A3        &   $X_0(41)$   &   \small $ y^2 = x^8 - 12x^7 + 48x^6 - 82x^5 + 60x^4 - 8x^3 - 27x^2 + 16x - 4$ \\
	48J3        &   $X_0(48)$   &   \small $ y^2 = x^8 + 14x^4 + 1$ \\
	\hline
\end{tabular}
\caption{Hyperelliptic Shimura type modular curves of genus $3$}
\label{tab: hyperelliptic genus 3}
\end{table}

\begin{remark}The curves corresponding to the groups labeled 12K3, 24V3 and 48J3 are isomorphic. The curves corresponding to the groups labeled 20J3 and 40F3 are isomorphic. No other curves in Table \ref{tab: hyperelliptic genus 3} are isomorphic. 
This phenomenon is explained by the fact that the corresponding groups are conjugate in $\GL_2(\hat{\Z})$. 
\end{remark}


 The other groups of Shimura type of genus 3 give rise to smooth plane quartics. In table \ref{tab: plane quartic genus 3} we present the plane quartics obtained.

\begin{table}[ht]
\begin{tabular}{|c|c|c|}\hline
    \bf{label}  &  \bf{name}    &   \bf{Plane quartic model}           \\ \hline
    7A3         &   $X(7)$      &   \small $  -xy^3 + x^3z + yz^3 = 0$     \\ 
    8A3         &               &   \small $ x^3z + 4xz^3-y^4 = 0 $        \\ 
    12O3        &               &   \small $ x^3z - 3x^2z^2 - xy^3 + 4xz^3 + 2y^3z - 2z^4 = 0$ \\ 
    15E3        &               &   \small $ x^3z - x^2 y^2 + xyz^2 - y^3 z - 5z^4 = 0$ \\ 
    16H3        &               &   \small $ -y^4 + x^3z + 4xz^3 = 0 $     \\
    20S3        &   $X_1(20)$   &   \small $ x^3z -x^2y^2 - 3x^2z^2 + xy^3 + 4xz^3 - 2z^4 = 0$  \\ 
    24X3        &               &   \small $ \begin{array}{c} x^3 z - 2x^2 yz - x^2z^2 - xy^3 + 2xy^2z \\
                                        + 6xyz^2 +2y^3z - 2y^2z^2 - 4xz^3 = 0 \end{array}$ \\ 
    24Y3        &               &   \small $ \begin{array}{c} x^3z - x^2y^2 - x^2z^2 + xz^3 - xy^2z \\
                                    - 3xyz^2 + y^3z + 2y^2z^2 + yz^3 = 0 \end{array}$ \\ 
    32J3        &               &   \small $ -y^4 + x^3z + 4xz^3 = 0$ \\ 
    34C3        &   $X_0(34)$   &   \small $ \begin{array}{c} -x^2y^2 + 2xy^3 - y^4 + x^3z + 3xy^2z + 4y^3z - \\
                                    3x^2z^2 - 3xyz^2 - 6y^2z^2 + 4xz^3 + 4yz^3 - 2z^4 = 0 \end{array} $ \\ 
    36K3        &               &   \small $ -xy^3 + x^3z + 2y^3z - 3x^2z^2 + 4xz^3 - 2z^4 = 0$ \\ 
    43A3        &   $X_0(43)$   &   \small $ \begin{array}{c} -2x^2y^2 + xy^3 - 9y^4 + x^3z + 2x^2yz + 3xy^2z + 24y^3z - \\
                                    2x^2z^2 - 5xyz^2 - 28y^2z^2 + 3xz^3 + 16yz^3 - 4z^4 = 0 \end{array}$ \\ 
    45D3        &   $X_0(45)$   &   \small $ -x^2y^2 + x^3z - y^3z + xyz^2 - 5z^4 = 0$ \\ 
    49A3        &               &   \small $ -xy^3 + x^3z + yz^3 = 0$ \\
    64B3        &   $X_0(64)$   &   \small $-y^4 + x^3z + 4xz^3 = 0$    \\
	\hline
\end{tabular}
\caption{Plane quartic Shimura type modular curves of genus $3$}
\label{tab: plane quartic genus 3}
\end{table}

\begin{remark}The curves corresponding to the groups labeled by 7A3 and 49A3 are isomorphic. The curves corresponding to the groups labeled by 8A3, 16H3, 32J3, 64B3 are isomorphic. The curves corresponding to the groups labeled by 12O3 and 36K3 are isomorphic. The curves corresponding to the groups labeled by 15E3 and 45D3 are isomorphic. No other curves in Table~\ref{tab: plane quartic genus 3} are isomorphic. 
Again, the explanation for these isomorphisms is that the congruence subgroups are conjugate in $\GL_2(\hat{\Z})$.
\end{remark}


We have also computed models for $92$ out of the $105$ congruence subgroups that are not of Shimura type and have $MK^2 \le 500$. An example of a plane quartic occurs for the group labeled $9A3$, cut out by the quartic $81x^4 - 54x^3y - 27x^2y^2 + 3xy^3 + y^4 - 729xz^3 + 486yz^3$.

\subsection{Genus 6}\label{subsec:g6}
Among the $8$ groups of Shimura type occurring, none admits a smooth plane model. We have also computed models for $19$ out of the $29$ congruence subgroups that are not of Shimura type and have $MK^2 \le 500$. 

Except the curve corresponding to 18A6, all the other ones are of genus $6$ and have a canonical model generated by quadrics, which means that they are non-hyperelliptic and that they do not admit a smooth plane model. 


For the curve 18A6, we also need cubic equations to define its canonical model. According to Theorem~\ref{thm:NEP}, this implies that it does admit a smooth plane model or that it is a trigonal curve. 
We first found an explicit birational equivalence between $\mathbb{P}^2$ and the locus of the quadrics in the ideal generating the canonical model. 
This birational map gives a parametrization of the locus of the quadrics. We plugged it into the degree 3 equations, and, after a suitable scaling of the variables, we found the following equation:
$$
y^3=(x - 3)(x + 1)(x^2 + 3)(x + 3)^2(x^2 + 6x + 21)^2.
$$

Interestingly, the curve (that it is not a smooth plane quintic) is not only trigonal, but also superelliptic. Notice that this is a quite remarkable exception since the dimensions of the moduli space of curves of genus $6$, and the locus of trigonal, plane and superelliptic ones of degree 3 inside it are: $\dim(\mathcal{M}_6)=15$, $\dim(\mathcal{M}^{trig}_6)=13$, $\dim(\mathcal{M}_6^{plane})=12$ and  $\dim(\mathcal{M}_6^{superell,3})=5$.
Actually, we could have guessed the existence of an automorphism of order $3$ of the curve, since $\Aut(G)$ contains an element $\sigma$ of order $3$, which induces an automorphism of the curve $X_G$. The induced action of the automorphism on the $q$-expansions is via $\sigma(f)(q) = f(\zeta_3 q)$. Therefore, we can readily compute its action on the curve, and observe that it corresponds to the action $x \mapsto x$ and $y \mapsto \zeta_3 y$ in the model we found.



\subsection{Genus 10} 

We have checked all the $17$ groups of Shimura type of genus $10$. None of them admits a smooth plane model. This was verified by computing the graded Betti number $\beta_{2,4}$. In $15$ out of the $17$ cases we have $\beta_{2,4} = 0$. Therefore, by Theorem~ \ref{thm:Koszul}, these curves do not admit a $g_6^2$, or equivalently a smooth plane model. The remaining cases, of the groups 46A10 and 92A10, are both isomorphic to the curve $X_0(92)$. In this case, we obtain $\beta_{2,4} = 27$, which by \cite{schreyer}*{Corollary 4.2} implies either that the curve is a smooth plane curve or that it is a double cover of an elliptic curve. However, in this case one checks that the quotient of the curve by the Atkin-Lehner involution $W_{23}$ yields an elliptic curve, hence by Lemma~\ref{lem:hyper} it does not admit a smooth plane model.
    
In most cases, projecting to $\P^2$ using the three divisors of maximal valuation at the cusp at $\infty$, which is a flex, one obtains a singular curve of degree $10$ or $11$ with coefficients of large height. However, for 
the groups in Table~\ref{tab: Shimura type modular curves of genus 10 which admit a morphism to an elliptic curve} we obtain a smooth cubic (an elliptic curve).


\begin{table}[ht]
\begin{tabular}{|c|c|c|}\hline
    \bf{label}  &  \bf{name}    &    \bf{Image in $\P^2$ }          \\ \hline
    9A10        &   $X(9)$      &   \small $y^2 = x^3 + 16$ \\ 
    18E10       &               &   \small $y^2 = x^3 - 12x^2 + 48x$ \\ 
    27B10       &               &   \small $y^2 = x^3 + 16$ \\ 
	36Q10 	    &               &   \small $y^2 = x^3 - 12x^2 + 48x$ \\ 
	54A10       &               &   \small $y^2 = x^3 - 12x^2 + 48x$ \\ 
	81A10       &               &   \small $y^2 = x^3 + 16$ \\ 
	108F10        &   $X_0(108)$   &   \small $y^2 = x^3 - 12x^2 + 48x$ \\ 
	\hline
\end{tabular}
\caption{Elliptic curves admitting a morphism from a Shimura type modular curve of genus $10$}
\label{tab: Shimura type modular curves of genus 10 which admit a morphism to an elliptic curve}
\end{table}

\subsection{Genus 15}
We have not been able to check the existence of a smooth plane model for any of the $23$ groups of Shimura type of genus $15$ using Algorithm~\ref{alg:cantoplane}, as computing the corresponding Betti numbers $\beta_{3,5}$ turned out to be beyond our computational ability. 

However, we can rule out the existence of smooth plane models for all these curves by looking at their Atkin-Lehner quotients. Indeed, any congruence subgroup $\Gamma(H,1)$ of Shimura type of level $N$ and parameter $t = 1$ satisfies $\Gamma_1(N) \subseteq \Gamma \subseteq \Gamma_0(N)$, and a subset of the Atkin-Lehner operators on $\Gamma_0(N)$ normalize $\Gamma$ as well. These induce automorphisms of the curve $X_G$ (for $G = G(H,1)$), and we denote the quotients by a subset $W$ of them by $X_G / W$. Whenever $|W| = 4$ and $X_G / W$ is hyperelliptic, we have a morphism $X_G \to \P^1$ of degree $8$, and we can deduce from Theorem~\ref{thm:Greco} that $X_G$ does not admit a smooth plane model (which must be of degree $d = 7$).

Moreover, each of the congruence subgroups of Shimura type of genus $15$ is isomorphic to a group of type $\Gamma(H,1)$. Table~\ref{tab: genus 15} summarizes our findings, where for groups which are conjugate in $\GL_2(\widehat{\Z})$ we have written down both labels.

\begin{table}[ht]
\begin{tabular}{|c|c|c|c|}\hline
    \bf{label}  &  \bf{name}    &    \bf{$W$} & \bf{$g(X_G / W)$ }           \\ \hline
    35C15, 175A15        &   $X_0(175)$      &  $\langle w_{25},w_7 \rangle$ & 3     \\
    40W15, 80R15       &               & $\langle w_{16},w_5 \rangle$ & 1 \\
    40X15, 80T15       &               & $\langle w_{16},w_5 \rangle$ &   1     \\
	43A15 	    &               & $\langle w_{43} \rangle$ &  7 \\
	51A15, 153A15       & $X_0(153)$              &  $ \langle w_{9},w_{17} \rangle$ & 2 \\
	60AC15       &               & $\langle w_3,w_5 \rangle$ &     1    \\
	60AD15        &      & $\langle w_3,w_5 \rangle$ & 1 \\
	67A15        &      & $ \langle w_{67} \rangle$ & 7 \\
	68D15, 136D15       &  $X_0(136)$    & $ \langle w_8,w_{17} \rangle$ & 3 \\
	85A15        &      & $ \langle w_5,w_{17} \rangle$ & 2 \\
	85B15        &      & $\langle w_5,w_{17} \rangle$ & 2 \\
	102C15        &  $X_0(102)$    & $\langle w_2,w_3, w_{17} \rangle$ & 1 \\
	110A15        &  $X_0(110)$    & $ \langle w_5, w_{11} \rangle$ & 1 \\
	141D15        &  $X_0(141)$    & $ \langle w_3, w_{47} \rangle$ & 1 \\
	155A15        &  $X_0(155)$    & $ \langle w_5, w_{31} \rangle$ & 1 \\
	161A15        &  $X_0(161)$    & $ \langle w_7, w_{23} \rangle$ & 2 \\
	179A15        &  $X_0(179)$    & $ \langle w_{179} \rangle$ &  3 \\
	193A15        &  $X_0(193)$    & $ \langle w_{193} \rangle$ & 7 \\
	\hline
\end{tabular}
\caption{Atkin-Lehner quotients of modular curves of Shimura type of genus $15$}
\label{tab: genus 15}
\end{table}

Note that, with the exception of $X_0(102), X_0(136), X_0(175), X_0(179), X_0(193)$ and the two curves corresponding to labels 43A15 and 67A15, the genus of all Atkin-Lehner quotients is either $1$ or $2$, implying that they are hyperelliptic, and $|W|=4$. By \cite{H1997} we deduce that $X_0^*(136) = X_0(136) / W$ is also hyperelliptic and thus we are left with six curves for which we were not able to rule out the existence of a smooth plane model using Theorem~\ref{thm:Greco}, namely $X_0(102), X_0(175), X_0(179), X_0(193)$ and the two curves corresponding to labels 43A15 and 67A15. Note that by \cite{FH1999} there is no quotient of either of $X_0(175), X_0(179), X_0(193)$ which is hyperelliptic.

In order to rule out the six remaining curves, we use Theorem~\ref{thm:Harui} with a simple application of Riemann-Hurwitz as follows. If $X_G$ admits a smooth plane model (of degree $d = 7$), and $w \in W$ is any Atkin-Lehner involution, by Theorem~\ref{thm:Harui} $w$ has $8$ fixed points. Riemann-Hurwitz then implies that the genus of the quotient is $g(X_G/ \langle w \rangle ) = 6$. Therefore, if we find some $w \in W$ where the genus of the quotient is not $6$, it does not admit a smooth plane model. Looking again at Table~\ref{tab: genus 15} we see that this rules out $X_0(179), X_0(193)$ and the two curves corresponding to labels 43A15 and 67A15. Finally, for $X_0(102)$ we see that $g(X_0(102) / \langle w_2 \rangle) = 7$, and for $X_0(175)$ we have $g(X_0(175) / \langle w_7 \rangle) = 8$, showing that they also do not admit a smooth plane model.

The map to $\P^2$ obtained by using the cusp at $\infty$ as the flex point always yields a singular curve of degree $16$, except the following cases.
For the group 60AC15, we obtain the elliptic curve 
\begin{equation*}
    y^2 = x^3 + 4x^2 - 16x.
\end{equation*}

For the groups 40W15 and 80R15  (which induce isomorphic curves) we obtain a septic with $3$ singular points, namely
\begin{align*}
    x^5y^2 &- x^6z - 5x^4y^2z + x^2y^4z + 4x^5z^2 + 12x^3y^2z^2 - 4xy^4z^2 - 6x^4z^3 \\
&- 16x^2y^2z^3 + 4y^4z^3 + 4x^3z^4 + 12xy^2z^4 - x^2z^5 - 4y^2z^5 = 0.
\end{align*}
\vspace{2ex}

\subsection{Genus 21}\label{genus21}
We have not been able to check the existence of a smooth plane model for any of the $55$ groups of Shimura type of genus $21$ using Algorithm~\ref{alg:cantoplane}, as computing the corresponding Betti numbers $\beta_{4,6}$ turned out to be beyond our computational ability. 

However, we can rule out the existence of smooth plane models for all but five of these curves by looking at their Atkin-Lehner quotients, and using Theorem~\ref{thm:Harui} and Riemann-Hurwitz as before. In this case, if $X_G$ admits a smooth plane model (of degree $d = 8$), and $w \in W$ is any Atkin-Lehner involution, by Theorem~\ref{thm:Harui} $w$ has $8$ fixed points. Riemann-Hurwitz then implies that the genus of the quotient is $g(X_G/ \langle w \rangle ) = 9$.
Table~\ref{tab: genus 21} shows, for each of these curves, the chosen Atkin-Lehner involution and the genus of the corresponding quotient, where for groups which are conjugate in $\GL_2(\widehat{\Z})$ we have written down all labels in a single line.

The curves left after this analysis are $X_0(256)$ and the curves corresponding to labels 41A21, 91A21, 91B21, 137A21.
The modular curve $X_0(256)$ has a single Atkin-Lehner involution, and its quotient by that involution has genus $9$. By Theorem \ref{thm:Harui}, if this curve admitted a smooth plane model, the corresponding quotient would have gonality $4$. However, by computing the Betti table of the quotient, we see that the $a(X_0(256)/\langle w \rangle)$ invariant of the table is $3$. Then, Corollary 9.7 in \cite{Eisenbud} implies that $\operatorname{Cliff}(X_0(256)/\langle w \rangle)\geq4$, so the gonality of  $X_0(256)/\langle w \rangle$ is at least $6$. On the other hand, it is also bounded by $6$ because of the genus being $9$. This proves that $X_0(256)$ does not admit a smooth plane model. For the other four curves, at the present time, no algorithm is implemented to compute their Atkin-Lehner quotients.  

\begin{table}[ht]
\begin{tabular}{|c|c|c|c|}\hline
    \bf{label}                              &  \bf{name}        &    \bf{$W' \subseteq W$}          & \bf{$g(X_G / W')$ }   \\ \hline
    21C21, 147B21                           &                   & $\langle w_{3} \rangle$           & 10                    \\
    24A21, 48CE21, 96BA21, 192P21           &  $X_0(192)$       & $\langle w_{64} \rangle$          & 11                    \\
	28M21, 56L21, 112E21 	                &                   & $\langle w_{112} \rangle$         & 10                    \\
	30G21, 90O21                            &                   &  $ \langle w_{10} \rangle$        & 11                    \\
	33C21                                   &  $X_1(33)$        & $\langle w_{33} \rangle$          & 10                    \\
	34A21                                   &  $X_1(34)$        & $\langle w_{34} \rangle$          & 10                    \\
	35B21, 245A21                           &  $X_0(245)$       & $ \langle w_{245} \rangle$        & 8                     \\
	36F21, 72AC21                           &                   & $ \langle w_{72} \rangle$         & 10                    \\
	39B21, 117A21                           &                   & $ \langle w_{13} \rangle$         & 11                    \\
	42H21, 84Q21                            &                   & $\langle w_3 \rangle$             & 10                    \\
	45F21                                   &                   & $ \langle w_{45} \rangle$         & 10                    \\
	52C21, 104B21                           &                   & $ \langle w_{13} \rangle$         & 11                    \\
	55B21                                   &                   & $ \langle w_5 \rangle$            & 3                     \\
	56M21                                   &                   & $ \langle w_8 \rangle$            & 6                     \\
	65A21                                   &                   & $ \langle w_{13} \rangle$         & 6                     \\
	69A21, 207A21                           &  $X_0(207)$       & $ \langle w_{23} \rangle$         & 8                     \\
	72AC21                                  &                   & $ \langle w_{72} \rangle$         & 10                    \\
	78C21                                   &                   & $ \langle w_{26} \rangle$         & 8                     \\
	92A21, 184A21                           &  $X_0(184)$       & $ \langle w_{23} \rangle$         & 5                     \\
	97A21                                   &                   & $ \langle w_{97} \rangle$         & 10                    \\
	111A21                                  &                   & $ \langle w_{111} \rangle$        & 7                     \\
	115A21                                  &                   & $ \langle w_{23} \rangle$         & 6                     \\
	119A21                                  &                   & $ \langle w_{7} \rangle$          & 6                     \\
	133B21                                  &                   & $ \langle w_{133} \rangle$        & 10                    \\
	138A21                                  &  $X_0(138)$       & $ \langle w_{23} \rangle$         & 5                     \\
	154B21                                  &  $X_0(154)$       & $ \langle w_{22} \rangle$         & 11                    \\
	165B21                                  &  $X_0(165)$       & $ \langle w_{3} \rangle$          & 11                    \\
	178A21                                  &  $X_0(178)$       & $ \langle w_{89} \rangle$         & 8                     \\
	201A21                                  &  $X_0(201)$       & $ \langle w_{67} \rangle$         & 11                    \\
	215A21                                  &  $X_0(215)$       & $ \langle w_{43} \rangle$         & 11                    \\
	247A21                                  &  $X_0(247)$       & $ \langle w_{247} \rangle$        & 8                     \\
 	251A21                                  &  $X_0(251)$       & $ \langle w_{251} \rangle$        & 4                     \\
 	257A21                                  &  $X_0(257)$       & $ \langle w_{257} \rangle$        & 7                     \\
	\hline
\end{tabular}
\caption{Atkin-Lehner quotients of modular curves of Shimura type of genus $21$ indicating no smooth plane model}
\label{tab: genus 21}
\end{table}

\subsection{Proof  of Theorem \ref{thm:main}}
We are ready to prove now our main theorem:

\begin{proof} 
The first claim is Theorem \ref{finite} and the second one is deduced from its proof. The last claim is a consequence of the computations in this section.
\end{proof}



\bibliographystyle{alphaabbr}
\bibliography{synthbib}

\end{document}